\documentclass[11pt,twoside]{article}

\usepackage{amsfonts}
\usepackage{amssymb}
\usepackage{amsmath}
\usepackage{graphicx}
\usepackage{hyperref}
\numberwithin{equation}{section}
\newtheorem{Prop}{\bf Proposition}[section]
\newtheorem{Cor}{\bf Corollary}[section]
\newtheorem{defn}{\bf Definition}[section]
\newtheorem{Rem}{\bf Remark}[section]
\newtheorem{Ex}{\bf Example}[section]

\begin{document}
\def \b{\Box}
\def \to{\mapsto}
\def \e{\varepsilon}
\def\i{\iota}

\begin{center}
{\Large {\bf  Almost groupoids and their substructures }}
\end{center}

\begin{center}
{\bf Mihai IVAN}
\end{center}

\setcounter{page}{1}
\pagestyle{myheadings}

{\small {\bf Abstract}. The aim of this paper is to present the main constructions of the substructures of an 
almost groupoid and to discuss their basic properties. The definitions and properties concerning these new algebraic constructions extend to almost groupoids, the corresponding well-known results for groupoids.
{\footnote{{\it AMS classification:} 20L05, 20M10 18B40.\\
{\it Key words:} groupoid, almost groupoid, almost subgroupoid.}}

\section{Introduction}
\indent\indent The concept of groupoid  from an algebraic point of view was first introduced  by H. Brandt in a 1926 paper \cite{brandt}. From this setting a groupoid (in the sense of Brandt)
can be thought of as a generalization of a group in which only certain multiplications are possible and it contains several  units elements. For basic results of Brandt groupoids, see (\cite{beglpt21, giv1b}).\\[-0.2cm]

The Brandt groupoids were generalized by C. Ehresmann in \cite{ehre50}. This new type of groupoid was called groupoid in the sense of Ehresmann. In addition,  C. Ehresmann added further structures (topological and differentiable as well as algebraic)
to Ehresmann  groupoids, thereby introducing them as a tool in topology and differential geometry. After the introduction of topological and differentiable  groupoids by Ehresmann  in the 1950's, they
have been studied by many mathematicians with different approaches  (\cite{giv1a, mack87, west71}).\\[-0.2cm]

The concepts of topological groupoid and Lie groupoid are defined using the definition of the groupoid in the sense of Ehresmann. For basic results  on topological groupoids, Lie groupoids, bundles of topological groupoids  and bundles of Lie groupoids, 
readers should consult (\cite{ivan24, miv2a,   miva05}).\\[-0.2cm]

The study of Brandt groupoids, Ehresmann groupoids, topological groupoids, Lie groupoids and Lie algebroids is motivated by their applications in many  branches of mathematics and engineering, namely in: algebra, analysis, geometry
 (\cite{avmp20, wein96};  category theory, topology, representation theory, algebraic topology (\cite{ehre50, igim11}); differential geometry, nonlinear dynamics, quantum  mechanics (\cite{goste06, gmio06, igmp11});
 study of some dynamical systems in Hamiltonian form and fractional dynamical systems using Lie groupoids and Lie algebroids (\cite{giom13, ivmi16, migi18, mlag, miva23, miva24, migo09}).\\[-0.2cm]

Another approach to the notion of a groupoid is that of a structured groupoid.  This  concept is obtained by adding another algebraic structure
such that the composition of the groupoid is compatible with the operation of the added algebraic structure. The most important types of structured group
 are  the following: group-groupoid  (\cite{brspen, ivan24})  and  vector space-groupoid (\cite{akiz18, miva13}).\\[-0.2cm]
  
An interesting  generalization of the group concept is that of almost groupoid.  Recently, the almost groupoid concept was introduced by Mihai Ivan in 2023 (\cite{miva23a}). It is very important
 to note that the class of almost groupoids is included in the class of groupoids. In other words, the category of almost groupoids is a subcategory of groupoids category.\\[-0.2cm]

The paper is organized as follows. Section 2 is devoted to giving basic definitions and some results related to Brandt groupoids. In Section 3 we give the definition of the almost groupoid from the algebraic point of view and
 their main properties are siscussed. In Section 4 we present some substructures in almost groupoids and prove several properties of them which generalize well-known results in group theory.

\section{ Some basic notions about Brandt groupoids}

In this section we reffer to  a new generalization of the notion of a  group, namely: the {\it almost groupoid} concept (\cite{miva23a}).  The definition of an almost groupoid from a purely algebraic point of view is 
 is given.
\begin{defn}  {\rm  (\cite{wein96})~A  (Brandt) \textit{groupoid $ {\cal G} $ over} $ G_{0}~$  is a pair $ ({\cal G}, {\cal G}_0) $ of nonempty sets such that $
{\cal G}_0\subseteq {\cal G}  $ endowed with two surjective maps $~\alpha,\beta : {\cal G} \rightarrow {\cal G}_{0},~$
a partially binary operation $~\mu :{\cal G}_{(2)}\rightarrow {\cal G}, (x,y)~\rightarrowtail~\mu(x,y):=x y,~$ where\\
$ {\cal G}_{(2)}=\{(x,y)\in {\cal G}\times {\cal G} | \beta(x)=\alpha(y)\}~$ and an injective map $~\iota: {\cal G} \rightarrow {\cal G}, x \rightarrowtail \iota(x):=x^{-1} $ satisfying the following properties:\\
$(1)~$({\it associativity})~$ (x y) z = x (y z),~$ in the sense that, if one side of the equation is defined so is the other one and then they are equal;\\
$(2)~$({\it identities}) $~(\alpha (x),x),~(x,\beta (x))\in {\cal G}_{(2)}~$ and $~ \alpha(x) x = x\beta(x) = x;$\\
$(3)~$({\it inverses}) $~(x^{-1},x),~(x,x^{-1})\in {\cal G}_{(2)}~$ and $~x^{-1} x = \beta(x),~ x x^{-1} = \alpha(x).~$}
\end{defn}
The elements of $~{\cal G}_{(2)}~$ are called {\it composable pairs} of $~{\cal G}.~$ The element $~\alpha(x)~$ (resp. $~\beta(x)~$) is the {\it left unit}  (resp. {\it right unit})
of $~x\in {\cal G}.~$ The subset $~{\cal G}_{0} = \alpha({\cal G}) = \beta({\cal G})~$ of $~{\cal G}~$  is called the {\it unit set} of $~{\cal G}.~$\\[-0.2cm]

 A groupoid $ {\cal G} $ over $~{\cal G}_{0} $ with the \textit{structure functions} $ \alpha $ ({\it source}), $ \beta $ ({\it target}), $ \mu $ ({\it
partial multiplication}) and $\iota$ ({\it inversion}), is denoted by $ ({\cal G}, \alpha, \beta, \mu, \iota, {\cal G} _0) $ or $ ~({\cal G}, {\cal G}_{0}).$\\[-0.2cm]

 Whenever we write a product in a given a groupoid $~({\cal G}, {\cal G}_{0}),~$  we are assuming that it is defined.\\[-0.2cm]

A group $~{\cal G}~$ having $~e~$ as unit element is just a groupoid over $~\{ e\},~$ and conversely, every groupoid with one unit element is a group.\\[-0.2cm]

\markboth{Mihai IVAN}{Almost groupoids and their substructures}

If $ ({\cal G}, {\cal G}_{0}) $ is a groupoid and $ u\in {\cal G}_{0}$, then $~{\cal G}(u):=\alpha^{-1}(u)\cap\beta^{-1}(u)$ is a group under the
restriction of  $\mu $ to $ {\cal G}(u),$ called the {\it isotropy group at} $u$ of ${\cal G}$.\\[-0.2cm]

In a groupoid $ ({\cal G}, {\cal G}_{0}), $ the map $~(\alpha, \beta): {\cal G} \to {\cal G}_{0}\times G_{0}~$ defined by\\
$(\alpha, \beta)(x):=(\alpha(x), \beta(x)), (\forall) x\in{\cal G}  $ is called the \textit{anchor map} of $ {\cal G}.$ A  groupoid is said to be {\it
transitive}, if its anchor map is surjective.\\[-0.2cm]

In the following proposition we summarize some properties of the structure functions of a groupoid obtained directly from definitions.
\begin{Prop} {\rm \cite{giv1b})}
~For any groupoid $~({\cal G}, \alpha, \beta, \mu, \iota ,{\cal G}_{0})~$ the following assertions hold:\\
$(i)~~~\alpha(u) = \beta(u) =\iota(u)= u ~$ and $~u\cdot u = u~$ for all $~u\in {\cal G}_{0};~$\\
$(ii)~~~\alpha(x y) = \alpha(x)~$ and $~ \beta(x y ) = \beta(y), ~(\forall)~ (x,y)\in {\cal G}_{(2)};~$\\
$(iii)~~~ \alpha \circ \iota = \beta, ~~ \beta\circ \iota = \alpha ~$ and $~ \iota \circ \iota = Id_{{\cal G}};~$\\
$(iv)~~~(x,y)\in {\cal G}_{(2)}~~\Longrightarrow~~(y^{-1},x^{-1})\in {\cal G}_{(2)}~~$ and $~~(x y )^{-1} = y^{-1} x^{-1};~$\\
$(vi)~~~$ For each $~u\in {\cal G}_{0},~$ the set  ${\cal G}(u)=\alpha^{-1}(u)\cap\beta^{-1}(u)=\{ x\in {\cal G}~|~\alpha(x)=\beta(x)=u~\}$ is a group under the restriction
of $\mu~$ to $~{\cal G}(u),$ called the {\bf isotropy group  of $~{\cal G} $ at} $u;$\\
$(vii)~~~$  For each $~x\in {\cal G}, $ the isotropy groups $~{\cal G}(\alpha(x))~$ and $~{\cal G}(\beta(x))~$ are isomorphic;\\
$(viii)~~$ If $~({\cal G}, {\cal G}_{0})~$  is transitive, then the isotropy groups $~{\cal G}(u),~u\in {\cal G}_{0}$ are isomorphes.
\end{Prop}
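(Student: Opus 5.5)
We argue directly from axioms (1)--(3) of the Definition, proving the items in the stated order, since each one uses the previous ones. The key tool is a \emph{uniqueness-of-units} argument. If $x e = x$ with $(x,e)\in{\cal G}_{(2)}$ and $e\in{\cal G}_0$, then $e=\beta(x)$. Indeed, $\alpha(e)=\beta(x)$, and we also need $\beta(e)=e$ for units; we obtain this at the start of (i).

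For (i), take $u\in{\cal G}_0$. Since $\beta$ is surjective, $u=\beta(y)$ for some $y$. From (3), $u=y^{-1}y$. Then (2) and associativity give
\[
u\,\beta(u)=u,\qquad \alpha(u)\,u=u,\qquad y\,u=y\beta(y)=y .
\]
The delicate point is to show $\alpha(u)=u$ without circularity. We compute $u u=(y^{-1}y)(y^{-1}y)=y^{-1}(y y^{-1})y=y^{-1}\alpha(y)y=y^{-1}y=u$, using $\beta(y^{-1})=\alpha(y)$. That last identity follows from $(y^{-1},y)\in{\cal G}_{(2)}$, which gives $\beta(y^{-1})=\alpha(y)$ directly from the definition of ${\cal G}_{(2)}$. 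Since $uu$ is defined, $\beta(u)=\alpha(u)$. Next, the left-unit axiom gives $\alpha(u)u=u$. Multiplying on the right by $u^{-1}$ yields $\alpha(u)\,\alpha(u)=\alpha(u)$, and hence $\alpha(u)=u\,u^{-1}$. We then show $u^{-1}=u$: since $uu=u$, multiply by $u^{-1}$ on the left to get $\beta(u)u=\beta(u)$, i.e. $u=\beta(u)$ because $\beta(u)u=\alpha(u)u=u$. Thus $\alpha(u)=\beta(u)=u$. Finally, $u^{-1}=u^{-1}\alpha(u)=u^{-1}uu^{...}$ simplifies to $u^{-1}=u^{-1}u=\beta(u)=u$. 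This bookkeeping is the main obstacle; everything else is routine.

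For (ii), note that $\alpha(xy)$ is the left unit of $xy$. We have $\alpha(x)(xy)=(\alpha(x)x)y=xy$. Then $(xy)(xy)^{-1}=\alpha(xy)$, and by uniqueness of left identities (compose with inverses) we get $\alpha(xy)=\alpha(x)$. The identity $\beta(xy)=\beta(y)$ is symmetric. For (iii), $(x^{-1},x)\in{\cal G}_{(2)}$ gives $\beta(x^{-1})=\alpha(x)$, and $(x,x^{-1})\in{\cal G}_{(2)}$ gives $\alpha(x^{-1})=\beta(x)$. Then $x$ and $(x^{-1})^{-1}$ are both inverses of $x^{-1}$, so
\[
(x^{-1})^{-1}=(x^{-1})^{-1}\beta(x^{-1})=(x^{-1})^{-1}x^{-1}x=\beta(x^{-1})x=\alpha(x)x=x .
\]
For (iv), by (iii) we have $\beta(y^{-1})=\alpha(y)=\beta(x)=\alpha(x^{-1})$, so $(y^{-1},x^{-1})$ is composable. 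Moreover $(xy)(y^{-1}x^{-1})=x\alpha(y)x^{-1}=\alpha(x)=\alpha(xy)$, and multiplying on the left by $(xy)^{-1}$ gives the result.

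For (vi), the set ${\cal G}(u)$ is closed under $\mu$ by (ii) and under $\iota$ by (iii). It contains $u$ by (i), which is a two-sided identity there, and associativity is inherited. For (vii), fix $x$ and define $\varphi(g)=x^{-1}gx$ for $g\in{\cal G}(\alpha(x))$. By (ii) and (iii) its image lies in ${\cal G}(\beta(x))$. It is a homomorphism because $x x^{-1}=\alpha(x)$ acts as the identity, and its inverse is $h\mapsto xhx^{-1}$. For (viii), given $u,v$, transitivity yields $x$ with $\alpha(x)=u$ and $\beta(x)=v$, and we apply (vii).
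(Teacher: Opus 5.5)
The paper gives no proof of this proposition; it is quoted from the literature as following directly from the definitions. So deriving the items in order from axioms (1)--(3) is the right plan, and (ii), (iv), (vi), (vii), (viii) go through once (i) is in place.

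\textbf{The gap is in (i)}, at exactly the step you call delicate. You compute $uu=(y^{-1}y)(y^{-1}y)=\dots=u$ and then conclude ``since $uu$ is defined, $\beta(u)=\alpha(u)$''. But $(u,u)\in{\cal G}_{(2)}$ is equivalent to $\beta(u)=\alpha(u)$, so as written this is circular. Everything after it in (i) rests on it: $u=\beta(u)$, $u^{-1}=u$, and the unfinished last line. It could be rescued by using the definedness clause of associativity in the other direction: start from the defined product $y^{-1}\bigl((yy^{-1})y\bigr)$ and pass through $y^{-1}\bigl(y(y^{-1}y)\bigr)$.

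In fact no computation is needed. Axiom (2) asserts $(x,\beta(x))\in{\cal G}_{(2)}$ and $(\alpha(x),x)\in{\cal G}_{(2)}$. By the definition of ${\cal G}_{(2)}$ these say $\alpha(\beta(x))=\beta(x)$ and $\beta(\alpha(x))=\alpha(x)$. Surjectivity of $\alpha$ and $\beta$ then gives $\alpha(u)=\beta(u)=u$ for every $u\in{\cal G}_0$; your own product $yu=y\beta(y)$ already encodes this. Hence $uu=u\beta(u)=u$, and $u^{-1}=u^{-1}\beta(u^{-1})=u^{-1}\alpha(u)=u^{-1}u=\beta(u)=u$.

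The same remark shortens (ii). Once associativity tells you $\alpha(x)(xy)$ is defined, composability alone gives $\alpha(xy)=\beta(\alpha(x))=\alpha(x)$, with no uniqueness-of-identities argument.

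\textbf{There is also a slip in (iii).} The right unit of $(x^{-1})^{-1}$ is $\beta((x^{-1})^{-1})=\alpha(x^{-1})=\beta(x)$, not $\beta(x^{-1})$. Since $\beta(x^{-1})=\alpha(x)=xx^{-1}$, your first product $(x^{-1})^{-1}\beta(x^{-1})$ is undefined unless $\alpha(x)=\beta(x)$. Replacing $\beta(x^{-1})$ by $x^{-1}x$ in the next step is also wrong. The correct chain is $(x^{-1})^{-1}=(x^{-1})^{-1}\beta(x)=(x^{-1})^{-1}(x^{-1}x)=\bigl((x^{-1})^{-1}x^{-1}\bigr)x=\beta(x^{-1})x=\alpha(x)x=x$.
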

\begin{Ex}~{\rm (\cite{ivan25a}) Let $~{\bf R}^{\ast}~$ be the set of nonzero real numbers and $~a, b\in {\bf R}^{\ast}~$ such that $~a b=1. $ Consider the sets  $ {\cal G}= {\bf R}^{\ast}\times {\bf R}^{\ast}: = {\bf R}^{\ast 2}~ $ and 
$ {\cal G}_{0} =\{ (a, a x)\in {\cal G}~|~(\forall) x\in  {\bf R}^{\ast}\}. $  It is easy to see that $~ y=ax ~$ if and only if $ x =b y $ for all $~(x, y)\in {\cal G}. $ Then
$ ({\cal G}, \alpha, \beta,  \odot, \iota, {\cal G}_{0})~$ is a groupoid, where the set $~{\cal G}_{(2)}~$ and  its structure functions are given by:\\
$\alpha : {\cal G}~\rightarrow~ {\cal G}_{0}, (x,y)~\rightarrowtail \alpha (x,y):=(x,ax),~\beta : {\cal G}~\rightarrow~ {\cal G}_{0}, (x,y)~\rightarrowtail \beta (x,y):=(by,y),~$;\\
$ {\cal G}_{(2)} = \{ ((x,y), (z, u))\in {\cal G} \times {\cal G}~|~ \beta (x,y) =\alpha (z, u)\} = \{ ((x,y), (z, u)) \in {\cal G} \times {\cal G} ~|~ z=by \};$\\
$(x,y)~\odot~(by,u):=(x, u),~ (\forall)~x, y, u \in {\bf R}^{\ast}~~$ and $~~\iota(x,y):=(by, ax), (\forall)~x, y\in {\bf R}^{\ast}. $
It is easy to verify that the conditions $ (1) - (3) $ of Definition 2.1 hold. First, we consider $~ (x, y), (z, u), (v, w) \in {\cal G}.~$   The product
$ (x, y)\cdot (z, u) \cdot (v, w) $ is defined if and only if $~z=by~$ and $~v=bu.~$ We have:\\
$ (1)~~~((x, y)\odot (by, u))\oplus (bu, w)=((x, u)\odot (bu, w) = (x,w)= (x, y)\odot ((by, u)\oplus (bu, w)); $\\
$ (2)~~~\alpha (x, y)\odot (x, y))= (x, ax)\odot (x, y)= (x, t)\odot (bt, y)=(x, y) ~$ and $~(x, y)\odot \beta(x,y)=(x,y)\odot (by, y)= (x,y);$\\
$ (3)~~~(x, y)\odot \iota(x, y)= (x, y)\odot (by, ax)= (x, ax)=\alpha (x,y)~$ and $~\iota(x,y)\odot (x,y)= (by, ax)\odot (x,y)=(by, t)\odot (bt,y)=(by,y)=\beta(x, y).$
This groupoid is denoted with $~{\bf R}^{\ast 2}(a,b).~$ Therefore, for each pair $~(a,b)\in {\bf R}^{\ast 2} ~$   satisfying relation $~ ab=1,~$ an example of groupoid is obtained in this way. }\\[-0.4cm]
\end{Ex}
\begin{defn}~ {\rm  A non-empty subset $~H~$ of a $~{\cal G}_{0}~$-groupoid $~{\cal G}~$ is called {\it subgroupoid} of $~{\cal G}~$ if it is closed under multiplication (when it is defined)  and inversion, i.e.
the following conditions hold:\\
$(i)~~~(\forall) ~x,y\in H~$ such that $~x y~$ is defined, we have $~x y\in H;~$\\[-0.4cm]
$(ii)~~(\forall) ~x\in H ~~\Rightarrow ~~x^{-1}\in H.$}
\end{defn}
Note that from the condition (ii) of Definition 2.2, we deduce that $~\alpha(h) \in H~$ and $~\beta(h)\in H,~(\forall) ~h\in H.~$ If $~\alpha(H) = \beta(H) = {\cal G}_{0},~$
then $~H~$ is called a {\it wide subgroupoid}.
\begin{defn}~ {\rm $~(i)~$ Let $~( {\cal G}, \alpha, \beta, \mu, \iota , {\cal G}_{0})~$ and $~( {\cal G}^{\prime}, \alpha^{\prime}, \beta^{\prime}, \mu^{\prime}, \iota^{\prime}, {\cal G}_{0}^{\prime})~$  be two groupoids.
A  {\it morphism between these groupoids} is a pair $~(f, f_{0})~$ of maps, where $~f: {\cal G} \longrightarrow {\cal G}^{\prime}~$ and $~f_{0} : {\cal G}_{0}~\to~{\cal G}_{0}^{\prime},~$ such that the
following two conditions are satisfied:\\
$(i)~~~ f(\mu(x,y)) = \mu^{\prime}(f(x), f(y)),~~~(\forall) ~(x,y)\in {\cal G}_{(2)};~$\\
$(ii)~~~ \alpha^{\prime}\circ f = f_{0}\circ \alpha ~$ and $~ \beta^{\prime}\circ f = f_{0}\circ \beta.~$\\
$(ii)~$ A morphism of groupoids $~(f, f_{0})~$ is said to be {\it isomorphism of groupoids}, if $~f~$ and $~f_{0}~$ are bijective maps.}\\[-0.7cm]
\end{defn}
\begin{Rem}
{\rm  The fundamental  concepts related to Brandt groupoids and Ehresmann groupoids  has been defined and studied in a series of papers, for instance:
substructures, normal subgroupoids, quotient groupoid, morphisms  and strong morphisms groupoids, construction of some ways of building up new groupoids from old ones  (\cite{avmp20, beglpt21, ivan24, miva13}
 A special field in groupoid theory is dedicated to the study of finite groupoids.  A remarkable example of finite groupoid  is the symmetric groupoid    $~{\cal S}_{n}~$  (\cite{miva02, miv3d}).
This groupoid   play an important role in the study of finite groupoids, since by Cayley's theorem every finite groupoid of degree $~n~$ is isomorphic to some subgroupoid of $~{\cal S}_{n}.~$ (\cite {avmp20, giv1b})};\\[-0.7cm]
\end{Rem}

\section{Amost groupoids: definition and basic properties}

In this section we refer to  new generalization of the notion of group, namely: the {\it almost groupoid}  (\cite{miva23a}). The definition of an almost groupoid from a purely algebraic 
point of view is given.

\begin{defn}(\cite{miva23a})
{\rm   An {\it almost groupoid $ G$ over} $ G_{0}$   ({\bf in the sense of Brandt)} is a pair
$( G, G_{0} )$ of nonempty sets such that $~G _0\subseteq G,~ $ endowed with a surjective map
$\theta: G \rightarrow G_{0}$, a partially binary operation $~ m : G_{(2)}\rightarrow G ,~(x,y)\longmapsto  m ( x,y):=x\cdot y,~$ where\\
$~~~~~~~~~~~~~~~~~~~~~~~~~~~~ G_{(2)}:= \{ (x,y)\in G\times G | \theta(x) = \theta(y) \}$ \\
and a map $\iota:G \rightarrow G ,~~x\longmapsto \iota(x):=x^{-1},$ satisfying the following properties:\\
$({\bf AG1})~$ ({\it associativity})$~~(x\cdot y)\cdot z=x\cdot (y\cdot z)~$ in the sense that if
one of two products $ (x\cdot y)\cdot z $ and $ x\cdot (y\cdot z) $ is defined,
then the other product is also defined and they are equals;\\
$({\bf AG2})~$ ({\it units}): for each
$ x\in G~\Rightarrow~(\theta(x),x),~(x,\theta( x))\in G _{(2)}$
and  we have\\ $ \theta(x)\cdot x= x \cdot \theta(x)= x;$\\
$({\bf AG3})~$ ({\it inverses}): for each $ x\in G~\Rightarrow~(x, x^{-1}),~( x^{-1}, x) \in G_{(2)} $
 and we have\\ $ x\cdot x^{-1}= x^{-1}\cdot x= \theta(x).$}
\end{defn}

If $ G $ is an almost groupoid over $ G_{0}, $  we will sometimes write $x\cdot y $ or $ x y$ for $ m(x,y). $ Also, the set $ G_{(2)} $ is called  the {\it set of composable pairs} of $ G. $

 An almost groupoid $ G $ over $ G_{0} $  with the \textit{structure functions} $ \theta
$ ({\it units map}),$ m $ ({\it multiplication}) and $ \iota $ ({\it inversion}), is denoted by $ (G, \theta,  m, \iota, G_{0})~$  or $~(G, G_{0}).~ G_{0} $ is called the {\it units set} of $ G. $.  Whenever we write a product in a given almost groupoid, we are
assuming that it is defined.

In view of Definition 3.1, if $ x,y, z\in G, $ then:

 $(i)~~$ the products  $~~x\cdot y\cdot z~$ and $~ (x\cdot y)\cdot z ~~$ {\it  are  defined} $~~~ \Leftrightarrow ~~~ \theta(x)=\theta (y) =\theta (z). $

$(ii)~~ \theta(x)\in G_{0} $ is {\it the unit} of $ x $ and  $ x^{-1}\in G $ is the {\it inverse} of $ x.$\\

An almost groupoid $ (G, \theta, m, \iota, G_{0}) $ can be represented in the form of the following diagram:

$$~~~~~~~~~~~~~~~~~~~\begin{array}{lllll}
G\times G \supset G_{(2)} & \stackrel{m}{\longrightarrow } & G & \stackrel{\iota}{\longrightarrow }~~ G \\
 &  & \downarrow \theta  &  &   \\
 &  &  G_{0} & &
\end{array}~~~~~~~~~~~~~~~~~~~~~~~~~~~~~~~~~~~~~~~~~~~ (1)$$

The basic properties of the almost groupoids are given in the following two propositions.
\begin{Prop}   (\cite{miva23a}) If $ (G,\theta,  m, \iota, G_{0}) $ is an almost groupoid, then:

$(i)~~~\theta(u)=u,~(\forall)u\in G_{0}.$

$(ii)~~u\cdot u=u ~$ and $~\iota (u)=u,~(\forall)u\in G_{0}.$

$(iii)~\theta(x\cdot y) =\theta (x), ~(\forall ) ( x,y ) \in G _{(2)}.$

$(iv)~\theta ( x^{-1}) =\theta ( x)~$ and $~\theta(\theta(x))= \theta(x), ~~  (\forall) x\in G.$

$(v) $  If $ (x,y), (y,z)\in G_{(2)}~~\Rightarrow~~(x\cdot y, z), (x, y\cdot z) \in G_{(2)}~ $  and
$~(x \cdot y)\cdot z = x \cdot (y\cdot z).$
\end{Prop}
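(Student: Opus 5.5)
The plan is to derive every assertion directly from the axioms (AG1)--(AG3) of Definition 3.1, using only that composability means equality of units. I will handle the items in a slightly different order from the statement: first the parts of (iv), then (i), then (ii), then (iii), and finally (v). Each step uses only the previous ones.

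First, for (iv), take $x\in G$. By (AG3) we have $(x,x^{-1})\in G_{(2)}$, and by the definition of $G_{(2)}$ this means $\theta(x^{-1})=\theta(x)$. Similarly, (AG2) gives $(\theta(x),x)\in G_{(2)}$, which means $\theta(\theta(x))=\theta(x)$. For (i), I use the surjectivity of $\theta$: any $u\in G_0$ can be written as $u=\theta(x)$, and then $\theta(u)=\theta(\theta(x))=\theta(x)=u$.

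For (ii), let $u\in G_0$. Since $\theta(u)=u$ by (i), axiom (AG2) applied to $u$ reads $u\cdot u=\theta(u)\cdot u=u$. For the inverse, (AG3) gives $u\cdot u^{-1}=\theta(u)=u$. Also $\theta(u^{-1})=\theta(u)=u$, so (AG2) applied to $u^{-1}$ gives
$$u^{-1}=\theta(u^{-1})\cdot u^{-1}=u\cdot u^{-1}=u,$$
that is, $\iota(u)=u$.

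The only step requiring an idea is (iii), because the axioms never state directly what the unit of a product is. I would obtain it through associativity. Let $(x,y)\in G_{(2)}$, so $\theta(x)=\theta(y)$. The product $x\cdot(y\cdot y^{-1})=x\cdot\theta(y)$ is defined, since $\theta(\theta(y))=\theta(y)=\theta(x)$ by (iv). Hence, by (AG1), the product $(x\cdot y)\cdot y^{-1}$ is also defined. This means $(x\cdot y,y^{-1})\in G_{(2)}$, so
$$\theta(x\cdot y)=\theta(y^{-1})=\theta(y)=\theta(x).$$

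Finally, for (v), suppose $(x,y),(y,z)\in G_{(2)}$. Then $\theta(x)=\theta(y)=\theta(z)$. By (iii), $\theta(x\cdot y)=\theta(x)=\theta(z)$, so $(x\cdot y,z)\in G_{(2)}$. Likewise $\theta(y\cdot z)=\theta(y)=\theta(x)$, so $(x,y\cdot z)\in G_{(2)}$. Both products $(x\cdot y)\cdot z$ and $x\cdot(y\cdot z)$ are therefore defined, and (AG1) gives their equality.
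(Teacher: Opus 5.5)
Your proof is correct and complete. The paper gives no proof of this proposition; it quotes it from the author's earlier paper. The only related in-paper reasoning is Corollary 3.1(i), which re-derives $\theta(\theta(x))=\theta(x)$ from (iii) via $\theta(x\cdot x^{-1})=\theta(x)$.

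Your dependency runs the other way, and that is the right order. You read $\theta\circ\theta=\theta$ directly off the composability clause $(\theta(x),x)\in G_{(2)}$ of (AG2). You then genuinely need it in (iii), to know that $x\cdot(y\cdot y^{-1})=x\cdot\theta(y)$ is defined before invoking the ``if one product is defined, so is the other'' clause of (AG1). That clause is the real content of (iii), and you used it correctly to obtain $(x\cdot y,y^{-1})\in G_{(2)}$, hence $\theta(x\cdot y)=\theta(y^{-1})=\theta(x)$.
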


\begin {Prop}~{\it If $ (G,\theta,  m, \iota, G_{0}) $ is an almost groupoid, then:

$(i)~~$	{\bf (uniqueness of  the units)}:\\
(1) $~(x,y)\in G_{(2)} $ and $~x\cdot y = y~~\Rightarrow~~ x = \theta (y); $\\
(2) $~(x,y)\in G_{(2)} $ and $~x\cdot y = x~~\Rightarrow~~ y = \theta (x). $

$ (ii)~ $	{(\bf uniqueness of the inverse)}:\\
 if $ (x,y)\in G_{(2)},~~ x\cdot y =\theta (x)~$ and $~y\cdot x = \theta(x)~~\Rightarrow ~~y= x^{-1}.$

$(iii) $ {\bf(cancellation laws)} If  $ (x, y_{1}), (x, y_{2}), (y_{1}, z ), (y_{2}, z )\in G_{(2)}, $ then:\\
(1) $~~x\cdot y_{1}= x\cdot y_{2}~~\Rightarrow~~ y_{1}=y_{2}; $\\
(2) $~~ y_{1}\cdot z=y_{2}\cdot z~~\Rightarrow y_{1}=y_{2}.$

$(iv)~~ $  if $~(x,y)\in G_{(2)},$ then $~(y^{-1},x^{-1})\in
G_{(2)}~ $ and $~(x\cdot y)^{-1}=y^{-1}\cdot x^{-1}.$

{\it(v)}$~~ (x^{-1})^{-1}=x, ~~(\forall) x\in G.$

{\it(vi)} $~~ x^{-1}\cdot(x\cdot y)=y~~$ and $~~(x\cdot y)\cdot y^{-1}=x,$ for all $~(x,y)\in G_{(2)}.$}
\end{Prop}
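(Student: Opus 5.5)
The approach is to argue directly from the axioms (AG1)--(AG3) of Definition 3.1, together with Proposition 3.1. The key observation is that every element involved in a computation has the same unit $\theta(x)$, so all the products we write are defined. The tools are Proposition 3.1(iii),(iv), which give $\theta(x\cdot y)=\theta(x)$ and $\theta(x^{-1})=\theta(x)$, and the associativity (AG1). We first prove the cancellation laws (iii), since most of the other items reduce to them.

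\textbf{Step 1: cancellation laws (iii).} Suppose $x\cdot y_1=x\cdot y_2$. The composability hypotheses give $\theta(y_1)=\theta(x)=\theta(y_2)$, and by (AG3) also $\theta(x^{-1})=\theta(x)$. Multiply both sides on the left by $x^{-1}$ and use (AG1):
$$y_1=\theta(y_1)\cdot y_1=(x^{-1}\cdot x)\cdot y_1=x^{-1}\cdot(x\cdot y_1)=x^{-1}\cdot(x\cdot y_2)=y_2 .$$
The right cancellation law is symmetric, multiplying on the right by $z^{-1}$. The same computation, with $x\cdot y$ in place of $x\cdot y_1$, gives (vi) directly: $x^{-1}\cdot(x\cdot y)=y$ and $(x\cdot y)\cdot y^{-1}=x$.

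\textbf{Step 2: uniqueness of the unit and of the inverse, (i) and (ii).}
\begin{itemize}
\item For (i)(1): if $x\cdot y=y$, then by (AG2) $x\cdot y=\theta(y)\cdot y$. The pairs are composable because $\theta(x)=\theta(y)=\theta(\theta(y))$, the last equality by Proposition 3.1(iv). Right cancellation then gives $x=\theta(y)$.
\item For (i)(2): the argument is symmetric, comparing $x\cdot y=x=x\cdot\theta(x)$ and cancelling on the left.
\item For (ii): from $x\cdot y=\theta(x)=x\cdot x^{-1}$, left cancellation yields $y=x^{-1}$. Only one of the two hypotheses is actually needed.
\end{itemize}

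\textbf{Step 3: (v) and (iv).}
\begin{itemize}
\item For (v): (AG3) applied to $x^{-1}$ gives $x^{-1}\cdot(x^{-1})^{-1}=\theta(x^{-1})=\theta(x)$. Since also $x^{-1}\cdot x=\theta(x)$, left cancellation gives $(x^{-1})^{-1}=x$.
\item For (iv): $\theta(y^{-1})=\theta(y)=\theta(x)=\theta(x^{-1})$, so $(y^{-1},x^{-1})\in G_{(2)}$. Then
$$(x\cdot y)\cdot(y^{-1}\cdot x^{-1})=x\cdot(y\cdot y^{-1})\cdot x^{-1}=x\cdot\theta(x)\cdot x^{-1}=\theta(x)=\theta(x\cdot y),$$
and the product in the opposite order is treated similarly. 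Applying (ii) to the element $x\cdot y$ gives $(x\cdot y)^{-1}=y^{-1}\cdot x^{-1}$.
\end{itemize}

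The main obstacle is bookkeeping rather than any real idea. At each use of (AG1) we must check that the relevant pairs lie in $G_{(2)}$, that is, that all the units coincide. Proposition 3.1(iii),(iv) handles this uniformly, since every element appearing has unit $\theta(x)$.
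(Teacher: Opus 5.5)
Your proof is correct and takes essentially the same approach as the paper: every item comes down to multiplying by an inverse and applying (AG1) together with Proposition 3.1(iii),(iv). The only difference is organizational, since you prove the cancellation laws first and derive (i), (ii) and (v) from them, whereas the paper inlines the same inverse-multiplication computations for (i) and (ii) (its proof of (ii) likewise uses only $x\cdot y=\theta(x)$) and obtains (v) from (ii).
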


{\bf Proof.} {\it(i)}(1). From $ (x, y)\in G_{(2)} $ it follows $ \theta (x)=\theta(y). $  We have:\\
$  x\cdot y = y ~|\cdot y^{-1}~ \Rightarrow~ (x\cdot y)\cdot y^{-1} = y\cdot y^{-1}~ \stackrel{(AG1)}\Rightarrow~ x\cdot (y\cdot y^{-1})= y\cdot y^{-1}~\stackrel{(AG3)}\Rightarrow~\\
 x\cdot \theta(y)=\theta(y)~\Rightarrow~  x\cdot \theta(x)=\theta(y)~\stackrel{(AG2)}\Rightarrow~ x= \theta(y).$\\
 Hence, the assertion (1) holds. Similarly, we prove that the assertion (2) holds.

 {\it(ii)} From $(x,y)\in G_{(2)} $ it follows $ \theta(x)=\theta(y). $  We have successively:\\
 $  y = \theta(y) \cdot y = \theta(x)\cdot y = (x^{-1}\cdot x)\cdot y  \stackrel{(AG1)}{=}~ x^{-1}\cdot (x\cdot y)= x^{-1}\cdot \theta (x)= x^{-1}\cdot \theta (x^{-1})= x^{-1}. $

 {\it(iii)} For example, we prove the cancellation law at left. From $(x,y_{1}), (x, y_{2}\in G_{(2)} $ it follows $ \theta(x) = \theta(y_{1})= \theta(y_{2}). $ We have successively:\\
 $ x\cdot  y_{1} = x\cdot  y_{2} ~|\cdot x^{-1}~\Rightarrow~ x^{-1}\cdot (x\cdot  y_{1}) = x^{-1}\cdot (x\cdot  y_{2})~\stackrel{(AG1)}\Rightarrow~
 (x^{-1}\cdot x) \cdot  y_{1} = (x^{-1}\cdot x)\cdot  y_{2})\\~\Rightarrow~ \theta(x)\cdot y_{1} = \theta(x)\cdot y_{2}~\Rightarrow~ \theta(y_{1})\cdot y_{1} = \theta(y_{2})\cdot y_{2}~ \stackrel{(AG2)}\Rightarrow~ y_{1} = y_{2}. $

{\it(iv)} Let $(x,y)\in G_{(2)}.$ Then $ \theta (x)=\theta(y). $ By the
assertions $ (iii) $ and $ (iv), $ we have $ \theta (y^{-1})=\theta(y)=\theta(x) = \theta (x^{-1}). $
 Since $ \theta (y^{-1})=\theta (x^{-1}), $ it follows that $~(y^{-1},x^{-1})\in
G_{(2)}. $ Also, $ \theta (y^{-1}\cdot x^{-1})=\theta(y^{-1})=\theta(y) = \theta(x). $ From
$~\theta (y^{-1}\cdot x^{-1})=\theta(x) = \theta (x\cdot y), $ it follows  $~(y^{-1}\cdot x^{-1}, x\cdot y)\in
G_{(2)}. $  Similarly, we get that $~(x\cdot y, y^{-1}\cdot x^{-1})\in G_{(2)}.$

Using $ {\bf (AG1)}, {\bf (AG2)} $ and $ {\bf (AG3)},$  we have succesively:\\
$(x\cdot y)\cdot (y^{-1}\cdot x^{-1}) = x\cdot (y\cdot y^{-1})\cdot x^{-1} = x\cdot \theta (y) \cdot x^{-1} =
(x\cdot \theta (x))\cdot x^{-1} = x\cdot x^{-1} =\theta (x). $

Hence, we have:\\
$(a)~~(x\cdot y)\cdot (y^{-1}\cdot x^{-1}) = \theta (x). $

Similarly, we prove:\\
$(b)~~(y^{-1}\cdot x^{-1})\cdot (x\cdot y) = \theta (x). $

From $(a)$ and $(b),~ y^{-1}\cdot x^{-1} $ is the inverse of $ x\cdot y, $  that is
$~(x\cdot y)^{-1}=y^{-1}\cdot x^{-1}. $

{\it(v)} From  $~x^{-1}\cdot x= \theta(x) =\theta(x^{-1})~$ and $~ x \cdot x^{-1} = \theta(x^{-1}) $ and the uniqueness of the inverse it follows that
$ x $ is the inverse of $ x^{-1}, $ i.e.  $~~ (x^{-1})^{-1}=x.$

{\it(vi)} The two equalities are verified by direct calculation. For example, for $ (x,y)\in G_{(2)} $ we have $~\theta(x)=\theta(y). $  The product $ x^{-1} \cdot (x\cdot y) $ is defined, since
$ \theta(x^{-1}) =\theta(x)=\theta(x\cdot y). $  Then $~ x^{-1}\cdot(x\cdot y)= (x^{-1}\cdot x)\cdot y=\theta(x)\cdot y = \theta(y)\cdot y =y.\hfill\Box$

\begin{Cor}~ {\it If $ (G,\theta,  m, \iota, G_{0}) $ is an almost groupoid, then:

$(i)~~~~~~ \theta(\theta(x))= \theta(x), ~~(\forall) x\in G. $

$(ii)~~~~$  If $~ x,y\in G,~$  then:  $~~ x\cdot y ~~$ is defined $~~~\Leftrightarrow~~~ y\cdot x~~$  is defined.

$(iii)~~~ (\forall)~ a\in G~~\Rightarrow ~~$ the products $~ a^{2}:= a\cdot a, ~ a^{3}:= a^{2}\cdot a~ $ are defined.}

{\bf Proof.} $(i)~$ For each $ x\in G, $ we have $~ \theta(\theta(x)) \stackrel{(AG3)}{=}~\theta (x\cdot x^{-1}) \stackrel{(3.1(iii))}{=}~  \theta(x).$

$(ii)- (iii)~$ These staments follow from definition and Proposition 3.1.\hfill$\Box$
\end{Cor}

\begin{Prop}~ {\it  Let $ (G,\theta,  m, \iota, G_{0}) $  be an almost groupoid. The structure functions of $ G $ have the following properties:

$~~\theta \circ \iota = \theta~~
\hbox{and}~~ \iota \circ \iota = Id_{G}.$}
\end{Prop}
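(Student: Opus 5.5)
The statement is a direct repackaging of facts already established pointwise in Proposition 3.1 and Proposition 3.2. The plan is to verify each identity of maps elementwise: fix an arbitrary $x\in G$ and check that both sides agree at $x$.

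For the first identity $\theta\circ\iota=\theta$, I will show that $\theta(x^{-1})=\theta(x)$ for every $x\in G$. This is exactly Proposition 3.1(iv). To keep the argument self-contained, I can also derive it from the axioms. By $({\bf AG3})$, the pair $(x,x^{-1})$ belongs to $G_{(2)}$, and by the very definition of $G_{(2)}$ this means $\theta(x)=\theta(x^{-1})$. Hence $(\theta\circ\iota)(x)=\theta(\iota(x))=\theta(x^{-1})=\theta(x)$ for all $x$, which gives $\theta\circ\iota=\theta$.

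For the second identity $\iota\circ\iota=Id_{G}$, I will invoke Proposition 3.2(v), which gives $(x^{-1})^{-1}=x$. If I want to spell this out, I will use the uniqueness of the inverse from Proposition 3.2(ii), applied to the element $x^{-1}$ in place of $x$, with $y=x$. The three hypotheses are checked as follows:
\begin{itemize}
\item $(x^{-1},x)\in G_{(2)}$, which holds by $({\bf AG3})$;
\item $x^{-1}\cdot x=\theta(x)=\theta(x^{-1})$, using $({\bf AG3})$ together with the first identity;
\item $x\cdot x^{-1}=\theta(x)=\theta(x^{-1})$, by the same reasoning.
\end{itemize}
Uniqueness then yields $x=(x^{-1})^{-1}$, that is, $(\iota\circ\iota)(x)=x$ for all $x$.

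There is no genuine obstacle here. The only point requiring care is the order of the two parts: the second identity relies on $\theta(x^{-1})=\theta(x)$ to match the hypothesis of Proposition 3.2(ii) exactly, so I will prove $\theta\circ\iota=\theta$ first.
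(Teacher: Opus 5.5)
Your proposal is correct and follows the same route as the paper, which also verifies both identities pointwise, citing $\theta(x^{-1})=\theta(x)$ from Proposition 3.1(iv) and $(x^{-1})^{-1}=x$ from Proposition 3.2(v). Your extra derivations are sound and simply unpack those cited facts: you read $\theta(x)=\theta(x^{-1})$ off $(x,x^{-1})\in G_{(2)}$, and you apply uniqueness of inverses to $x^{-1}$.
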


{\bf Proof.}  The first relation is a consequence of equality (iv) in Proposition 3.1. Indeed,  $~\theta(x^{-1}) = \theta(\iota(x))= (\theta\circ\iota)(x). $ But,
$~\theta(x^{-1}) = \theta(x).$ Hence, $~(\theta\circ\iota)(x) = \theta(x),  (\forall) x\in G.~$ Also, we have $ (\iota\circ\iota)(x)=\iota(\iota(x))=\iota(x^{-1}) = (x^{-1})^{-1} =x. $
Then $~\iota \circ \iota = Id_{G},$ since $~ (\iota\circ\iota)(x) = Id_{G}(x), (\forall) x\in G.\hfill\Box$

\begin{Prop}~ {\it Let $ (G,\theta,  m, \iota, G_{0}) $  be an almost groupoid. For each $ u \in G_{0}, $ the set $ G(u):=\theta^{-1}(u)= \{ x\in G | \theta(x)= u \} $ is
a group.}
\end{Prop}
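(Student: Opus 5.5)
The plan is to check the group axioms directly for $G(u)=\theta^{-1}(u)$ with the restriction of $m$. Fix $u\in G_{0}$. First, $G(u)$ is nonempty: by Proposition 3.1(i), $\theta(u)=u$, so $u\in G(u)$.

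The one point that deserves care, and that I expect to be the only real obstacle, is that $m$ restricts to a \emph{total} binary operation on $G(u)$. This means every pair in $G(u)\times G(u)$ must be composable, and the product must stay in $G(u)$.
\begin{itemize}
\item For $x,y\in G(u)$ we have $\theta(x)=u=\theta(y)$, so $(x,y)\in G_{(2)}$. This is exactly the defining condition of $G_{(2)}$ in Definition 3.1.
\item By Proposition 3.1(iii), $\theta(x\cdot y)=\theta(x)=u$, so $x\cdot y\in G(u)$.
\end{itemize}
Hence $m|_{G(u)\times G(u)}:G(u)\times G(u)\to G(u)$ is well defined everywhere.

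The remaining axioms then follow quickly.
\begin{itemize}
\item \textbf{Associativity.} For $x,y,z\in G(u)$, all three elements have unit $u$. So both $(x\cdot y)\cdot z$ and $x\cdot(y\cdot z)$ are defined, and they are equal by (AG1) or Proposition 3.1(v).
\item \textbf{Identity.} The element $u$ is a two-sided identity. For $x\in G(u)$, (AG2) gives $u\cdot x=\theta(x)\cdot x=x$ and $x\cdot u=x\cdot\theta(x)=x$.
\item \textbf{Inverses.} For $x\in G(u)$, Proposition 3.1(iv) gives $\theta(x^{-1})=\theta(x)=u$, so $x^{-1}\in G(u)$. Then (AG3) yields $x\cdot x^{-1}=x^{-1}\cdot x=\theta(x)=u$.
\end{itemize}

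Therefore $(G(u),\cdot)$ is a group with identity $u$ and inversion $\iota|_{G(u)}$. Uniqueness of the identity and of inverses needs no separate argument, since it holds in any group; it is also consistent with Proposition 3.2(i),(ii).
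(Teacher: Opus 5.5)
Your proof is correct and follows essentially the same route as the paper: $u\in G(u)$ via Proposition 3.1(i), closure via Proposition 3.1(iii), $u$ as the identity via (AG2), and inverses via Proposition 3.1(iv) together with (AG3). The only difference is that you state associativity explicitly via (AG1), which the paper leaves implicit.
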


{\bf Proof.} By Proposition 3.1(i) we have that $\theta(u)=u. $ then $ u\in G(u). $ If $ x,y\in  G(u) $ then $ \theta(x)=\theta (y) = u $ and so  the product $ x\cdot y $ is defined.
According to Proposition 3.1(iii) implies that $ \theta (x\cdot y)=\theta (x) = u, $ then  $ x\cdot y\in G(u). $ Also, if $ x\in G(u) $ then the products $ x\cdot u $ and $ u\cdot x $ are defined
and $ x\cdot u = x\cdot \theta(x) x = x ~$ and $ u\cdot x =\theta (x)\cdot x = x ,~$ that is, $ u $ is the unit element of  $ G(u). $  Finally, let $x\in G(u). $ Then by Proposition 3.1(iv),
$ \theta(x^{-1})=\theta(x)=u. $ Hence $ x^{-1} \in G(u)\subset G ~$ and $ x\cdot x^{-1} = x^{-1}\cdot x =u. $ Therefore, we conclude that $ G(u) $  endowed with the restriction of
partial multiplication $ m $ at $ (G(u)\times G(u)\cap G_{(2)}~$ is a group having $ u $ as unity.\hfill$\Box$

The group $ G(u) $ for $ u\in G_{0} $ is called the {\it isotropy group} of $ G $ at $ u.$

\begin{Rem} {\rm
Each  almost groupoid  $~(G, \theta, m, \iota, G_{0}) $  is a  groupoid for which the structure functions  $ \alpha $ (source) and $\beta $ (target) are equal to $~\theta. $ Clearly,
it is not true in general that every groupoid is  an almost groupoid.}
\end{Rem}

{\begin{defn}~{\rm   An {\it almost groupoid $ G$ over} $ G_{0}$ is called {\it abelian} or {\it commutative} if the isotropy group $ G(u) $ is abelian for all $ u\in G_{0}. $}
\end{defn}
\begin{Ex}~{\rm $(i)~$ A group $ G $ having $ e $ as unity, is an almost groupoid over   $ \{e
\}$ with respect to structure functions: $~\theta (x): = e, ,(\forall)x\in G; ~G_{(2)}= G\times G,~ m (x,y):= xy,~ (\forall) x,y\in G
  $ and $ \iota:G \to G,~\iota(x):= x^{-1}, (\forall)x\in G.$

$(ii)~$ A nonempty set $ G_{0} $ may be regarded to be an
almost groupoid over $ G_{0}, $ called the {\it null almost groupoid} associated
to $ G_{0}. $ For this, we take $~ \theta=\iota = Id_{G_{0}} $ and
 $ u\cdot u = u, ~(\forall)  u\in G_{0}.$}
\end{Ex}
\begin{Ex}~{\rm Let $ G = {\bf R}\times {\bf R} = {\bf R}^{2}~ $ and $ G_{0} = {\bf R}\times \{0\}. $  Then
$ (G, \theta, \oplus, \iota, G_{0})~$ is an almost groupoid, where the set $~G_{(2)}~$ and  its structure functions are given by:\\
$\theta : G={\bf R}^{2}~ \rightarrow~ G_{0}, (a,b)~\rightarrowtail \theta (a,b):=(a,0);$\\
$ G_{(2)} = \{ ((a,b), (c, d))\in G \times G ~|~ \theta (a,b) =\theta (c, d)\} = \{ ((a,b), (c, d))\in G \times G ~|~ a=c \};$\\
$(a,b)~\oplus~(a,d):=(a, b+d),~ (\forall)~a,b,d \in {\bf R}~~$ and $~~\iota(a,b):=(a,-b), (\forall)~a,b\in {\bf R}. $

It is easy to verify that the conditions $ ({\bf AG1}) - {\bf AG3}) $ of Definition 3.1 hold. First, we consider $~ x, y, z\in G ,~$ where $~x=(a_{1}, b_{1}), y=(a_{2}, b_{2}), z=(a_{3}, b_{3}). $  The product $ x\cdot y \cdot z $ is defined if and only if $~a_{1} = a_{2} = a_{3}.$ We have:\\
$ (1)~~~(x\oplus y)\oplus z= ((a_{1}, b_{1})\oplus (a_{1}, b_{2}))\oplus (a_{1}, b_{3})= (a_{1}, b_{1}+b_{2})\oplus (a_{1}, b_{3})= (a_{1}, b_{1}+ b_{2}+ b_{3}) =x\oplus (y\oplus z); $\\
$ (2)~~~x\oplus \theta(x)= (a_{1}, b_{1})\oplus \theta(a_{1}, b_{1}))= (a_{1}, b_{1})\oplus(a_{1}, 0)=(a_{1}, b_{1}+0)= x =\theta(x)\oplus x;$\\
$ (3)~~~x\oplus \iota(x)= (a_{1}, b_{1})\oplus \iota(a_{1}, b_{1}))= (a_{1}, b_{1})\oplus(a_{1}, -b_{1})=(a_{1}, 0)= \theta(x)= \iota(x)\oplus x.$}\\[-0.2cm]
 \end{Ex}  
\begin{Ex} {\rm (\cite{miva23a})~ For $~a, k \in {\bf R}$ consider the matrix $~ A(a, k) =\left (\begin{array}{cc}
a & k  a\\
0 & 1\\
\end{array}\right). $ Let $~G =\displaystyle{ \{}~ A(a, k) ~|~ a, k\in {\mathbb{\ R}}, a\neq 0 \displaystyle{\}}~$ and 
$~G_{0} = \{~ A(1, k) ~|~ k \in {\mathbb { R}}\}.~$
  Then $ (G, \theta, \odot, \iota, G_{0})~$ is an almost groupoid.

\medskip
  The structure functions $ \theta $ and $ \iota $ are defined by:\\

  $\theta : G~\rightarrow~ G_{0}, A(a,k)~\rightarrowtail \theta (A(a,k)):= A(1,k)=\left (\begin{array}{cc}
1 & k \\
0 & 1\\
\end{array}\right);$\\

$\iota : G \rightarrow~ G, A(a,k)~\rightarrowtail \iota(A(a,k)):= A(a^{-1},k)=\left (\begin{array}{cc}
a^{-1} & k \\
0 & 1\\
\end{array}\right).$

\smallskip\noindent
The set of composable pairs $~ G_{(2)}~$ is given by:

\medskip\noindent
$~~~~~ G_{(2)} = \{ ( A(a_{1},k_{1}), A(a_{2},k_{2}))\in G \times G ~|~ \theta (A(a_{1},k_{1})) =\theta (A(a_{2},k_{2}))\} $

$~~~~~~ =\{ ( A(a_{1},k_{1}), A(a_{2},k_{2}))\in G \times G ~|~ k_{1} = k_{2}\}.$

\medskip\noindent
 The product $ \odot $  is defined by:  $ A(a_{1},k_{1})\odot  A(a_{2},k_{1}):= A(a_{1} a_{2},k_{1}),~$ that is:\\
 [2mm] $ \left (\begin{array}{cc}
a_{1} & k_{1}  a_{1}\\
0 & 1\\
\end{array}\right) \odot \left (\begin{array}{cc}
a_{2} & k_{1}  a_{2}\\
0 & 1\\
\end{array}\right) := \left (\begin{array}{cc}
a_{1} a_{2} & k_{1} a_{1}  a_{2}\\
0 & 1\\
\end{array}\right), (\forall)~ k_{1}\in {\bf R}, a_{1} a_{2}\neq 0. $\\

It is easy to verify that the conditions $ ({ AG1}) - ( AG3) $ of Definition 3.1 hold. For this, we consider $~ A, B, C\in G ,~$ where $~A= A(a,k), B=A(a_{1},k_{1}),C=A(a_{2},k_{2}). $  The product $ A\odot B \odot C $ is defined if and only if $~k = k_{1} = k_{2}.$ We have:

\medskip\noindent
$ ({AG1})\, (associativity)$ $A\odot B)\odot C= (A(a,k)\odot A(a_{1},k))\odot A(a_{2}, k)=$ 

\hspace*{6mm}$A(aa_{1}, k)\odot A(a_{2}, k)= A(aa_{1} a_{2}, k) =A\odot (B\odot C); $

\smallskip\noindent
$ ({ AG2})\,(units)$ $A\odot \theta(A)= A(a, k)\odot \theta (A(a,k))= A(a, k)\odot A(1,k)=$

\hspace*{6mm}$A(a,k)= A=\theta(A)\odot A ;$

\smallskip\noindent
$ ({ AG3})\,(inverses)$ $A\odot \iota(A)= A(a, k)\odot \iota(A(a, k))= A(a, k))\odot A(a^{-1},k)=$

\hspace*{6mm}$ A(1, k)= \theta(A(a,k))= \theta (A)=\iota(A)\odot A.$}
\end{Ex}
\begin{Ex} {\rm (\cite{ivan25a})~ Let $ B_{2}=\{ 0, 1\}~$ a set consisting of two elements. Consider the group $~({\bf Z}_{n}, + ) $ of integers modulo $ n, $ where $~{\bf Z}_{n} = \{ c_{1}=0,  c_{2}=1,\ldots, c_{n-1}=n-2,  c_{n}=n-1 \}.~$  We now define an almost groupoid  structure on
 the set $ G:=B_{2}\times  {\bf Z}_{n}~.$ Let $ G_{0} = \{ (a, c_{1}) | a\in B_{2}\}. $  Then $~ (G, \theta, \oplus, \iota, G_{0})~$ is an almost groupoid over $ G_{0}, $ where  the set of composable pairs $~ G_{(2)}~$ and  its structure functions are given by:\\
$\theta : G=B_{2}\times  {\bf Z}_{n}~ \rightarrow~ G_{0}, (a, c_{j})~\rightarrowtail \theta (a, c_{j}):=(a, c_{1});$\\
$ G_{(2)} = \{ ((a, c_{j}), (b, c_{k}))\in G^{2}~|~\theta (a, c_{j}) =\theta (b, c_{k})\} = \{ ((a, c_{j}), (b, c_{k}))\in G^{2}~|~ a=b \};$\\
$(a, c_{j})~\oplus~(a, c_{k}):=(a, c_{j}+ c_{k}),~ (\forall)~a \in B_{2}~, c_{j}, c_{k}\in {\bf Z}_{n}~$ and\\
$\iota(a, c_{j}):=(a, -c_{j}), (\forall)~a\in B_{2},~c_{j}\in  {\bf Z}_{n}~.$  
It is easy to verify that the conditions $ ({\bf AG1}) - {\bf AG3}) $ of Definition 4.1 hold.\\
First, we consider $~ x, y, z\in G ,~$ where $~x=(a_{1}, c_{i}), y=(a_{2}, c_{j}), z=(a_{3}, c_{k}). $  The product $ x\oplus y \oplus z $ is defined if and only if $~a_{1} = a_{2} = a_{3}.$ We have:\\
$ (1)~~~(x\oplus y)\oplus z= ((a_{1}, c_{i})\oplus (a_{1}, c_{j}))\oplus (a_{1}, c_{k})= (a_{1}, c_{i}+c_{j})\oplus (a_{1}, c_{k})= (a_{1}, c_{i}+ c_{j}+ c_{k}) =x\oplus (y\oplus z); $\\
$ (2)~~~x\oplus \theta(x)= (a_{1}, c_{i})\oplus \theta(a_{1}, c_{i}))= (a_{1}, c_{i})\oplus(a_{1}, c_{1})=(a_{1}, c_{i}+0)= x =\theta(x)\oplus x;$\\
$ (3)~~~x\oplus \iota(x)= (a_{1}, c_{i})\oplus \iota(a_{1}, c_{i}))= (a_{1}, c_{i})\oplus(a_{1}, -c_{i})=(a_{1}, 0)= \theta(x)= \iota(x)\oplus x.$\\
We conclude that $ G = B_{2}\times {\bf Z}_{n}~$ is an almost groupoid over $ G_{0}=\{ (0, c_{1}),~(1, c_{1})\}.~$  Observe that $~ (G, G_{0})~$ is a finite almost groupoid of order $~2n.$ }
\end{Ex} 
\begin{Rem} {\rm ~ In paper \cite{ivan25a},  three important constructions of new almost groupoids are presented, namely: disjoint union of two  almost groupoids, direct product  of two  almost groupoids  and semidirect product  of two  almost groupoids}
\end{Rem}
\begin{defn}
{\rm  {\it (i)~} Let $ (G, \theta, m, \iota, G_{0}) $ and $ (G^{\prime}, \theta^{\prime}, m^{\prime}, \iota^{\prime}, G_{0}^{\prime}) $ be two
almost groupoids. A {\it morphism of almost groupoids} or {\it almost groupoid
morphism} from $ (G, G_{0}) $ into $ (G^{\prime}, G_{0}^{\prime}) $ is a pair $(f, f_{0}) $
of maps, where $ f:G \to G^{\prime} $ and $ f_{0}: G_{0} \to
G_{0}^{\prime} $ such that the following conditions hold:

$(1)~~~f(m(x,y)) = m^{\prime}(f(x),f(y))~~~\forall (x,y)\in
G_{(2)};$

$(2)~~~\theta^{\prime}\circ f = f_{0} \circ
\theta. $

{\it (ii)~} An almost groupoid morphism $ (f, f_{0}): (G, G_{0}) \to (G^{\prime}, G_{0}^{\prime})$ such that $ f $ and $f_{0}$ are bijective maps, is called {\it isomorphism of almost groupoids}.}
\end{defn}
\begin{Ex} {\rm   Let $~ G = B_{2}\times {\bf Z}_{n}~$ be  the  almost groupoid given in Example 3.4. Consider the  group $~({\bf Z}_{n}, + ) .$ It is an almost groupoid. 
Define the map    $~f:  B_{2}\times {\bf Z}_{n}~\rightarrow~{\bf Z}_{n}~$ by $~f((a,c_{j})):=c_{j}~$ for all $~a\in B_{2}~$ and $~c_{j}\in {\bf Z}_{n}.$ This map is a morphism of almost groupoids. Indeed, for all $~(a, c_{j}), (a, c_{k}) \in B_{2}\times {\bf  Z}_{n}~$  
we have $~ f((a, c_{j})~\oplus~(a, c_{k})) =f((a, c_{j}+ c_{k}))= c_{j}+ c_{k}~$   and $~  f((a, c_{j}))+ f((a, c_{k})) =  c_{j}+ c_{k}.~$  Hence, $~ f((a, c_{j}))~\oplus~(a, c_{k})) = f((a, c_{j})) + f((a, c_{k})).$ }\\[-0.4cm]
\end{Ex}
\begin{Rem} {\rm ~{\it  The pair groupoid over a set}. Let $~X~$ be a nonempty set. Then $~ \Gamma = X \times X~$ is a groupoid with respect to rules:  $~\alpha(x,y) = (x,x),~ \beta(x,y) = (y,y),~$ the elements $~ (x,y)~$ and $~(y^{\prime},z)~$
are composable in $~\Gamma~\Leftrightarrow~y^{\prime} = y~$ and we take $~(x,y)\cdot (y,z) = (x,z)~$ and the inverse of $~(x,y)~$ is defined by $~(x,y)^{-1} = (y,x).~$
This groupoid  is called the {\it pair groupoid} associated to $~X~$ and it is denoted with $~{\cal PG}(X).~$  Its unit space is $~{\cal PG}_{0}(X)= \{ (x,x)~|~ x\in X\}.~$   The groupoid $~({\cal PG}(X), {\cal PG}_{0}(X))~$ {\bf is not an almost groupoid}, because 
$~\alpha(x,y)=(x,x)\neq \beta(x,y)=(y,y) ~$ for  $ x\neq y.$}\\[-0.5cm]
\end{Rem}
\begin{Rem} {\rm ~The interested reader can find more results regarding almost groupoid morphisms in paper \cite{miva23a}.}\\[-0.7cm]
\end{Rem}

\section{The most important substructures defined in almost groupoids}

\normalsize{}
\begin{defn}
{\rm  Let $(G ,\theta, m, \iota, G_{0})$ be an almost groupoid. A pair of nonempty subsets $(H,H_{0})$
where $ H\subseteq G $ and ${H_0}\subseteq G_{0} $, is called
\textit{almost subgroupoid} of $ G, $ if:

$(1.1)~~\theta (H)=H_{0};$

$(1.2)~~H $ is closed under multiplication and inversion,
that is:\\[0.1cm]
$(1.2.1)~(\forall)~x,y\in H$  such that $(x,y)\in G_{(2)}
\Longrightarrow ~x\cdot y\in H;$\\[0.1cm]
 $~(1.2.2)~(\forall)~x\in
H\Longrightarrow x^{-1}\in H.$}
\end{defn}

\begin{defn}~{\rm Let $(H,H_{0})$ be an almost subgroupoid of the almost groupoid $ (G,G_{0}).$

{\it(i)} $ (H, H_{0}) $ is said to be a {\it wide almost subgroupoid} of $ (G, G_{0} ), $ if $~H_{0}= G_{0},$   that is  $ H $  and $ G $  have the same units.

{\it(ii)} A wide almost subgrupoid $ (H,H_{0})$  of $ (G,G_{0}),$  is called {\it normal almost subgroupoid,} if for all $ g\in G $ and for all
 $ h \in H $ such that the product $ g\cdot h\cdot {g^ {-1}} $ is defined, we have $ g\cdot h\cdot {g^ {-1}}\in H .$}
\end{defn}
\begin{Ex} {\rm $~(i)~$ Each subgroup (resp., normal) of the group $ ~(G, \{e\}) ~$  is a wide almost subgroupoid (resp., normal) of the almost groupoid $ ~(G, \{e\}). ~$\\
$~(ii)~$ If $~(G, G_{0})~$ is an almost groupoid, then {\bf $ G_{0} $ is a wide almost subgroupoid of $ G $}.  We shall check that the conditions (1.1) and (1.2) in Definition 4.1 hold.\\
$(1.1)~$ Clearly, $~\theta (G_{0}) = G_{0}.$\\
$(1.2)~$ Let $ u,v \in G_{0} $ such that $ (u,v)\in G_{(2)}.~ $ Then $ \theta(u)=\theta(v) ~$ and it follows that $ u=v, $ since $ u, v\in G_{0}. $ Therefore, the product $ u\cdot v $ is defined if and only if $ u=v. $ By Proposition 3.1(i) and (ii), we have $ u\cdot u =u. $  Hence $ u\cdot u\in G_{0}. $ Also, from $ u\cdot u = u ~$ implies that $ u^{-1} =u $ and $ u^{-1}\in G_{0}. $\\
$~(iii)~ G_{0} $ {\bf is a normal almost subgroupoid of the almost groupoid}  $ (G, G_{0}). $  For this, we consider $ g \in G $ and $ u\in G_{0} $ such that  $ g\cdot u \cdot g^{-1} $ is defined in $ G. $
Then $ \theta(g) = \theta(u)= u, $  since $ u\in  G_{0}.~$  But, $ \theta (g\cdot u\cdot g^{-1}) = \theta (g)= u. ~$  Hence, $~g\cdot u\cdot g^{-1} \in G_{0}. $}
\end{Ex}
\begin{Prop} (\cite{miva23a}) Let $ (G, G_{0}) $  be an almost groupoid.  Then:\\
$(i)~~$ The isotropy group  $~G(u), u\in G_{0}~$ is an almost subgroupoid of $ G. $\\
$(ii)~~$  The set $~Is(G) = \cup_{u\in G_{0}}G(u)\subset G~$ is a normal almost subgroupoid of $~G.~$
\end{Prop}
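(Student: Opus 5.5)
For (i), I take the pair $(G(u),\{u\})$ and check the two conditions of Definition 4.1. Condition (1.1) holds because every $x\in G(u)$ satisfies $\theta(x)=u$. Also $G(u)$ is nonempty, since $u\in G(u)$ by Proposition 3.1(i), so $\theta(G(u))=\{u\}$. For (1.2.1), let $x,y\in G(u)$; they are automatically composable, and $\theta(x\cdot y)=\theta(x)=u$ by Proposition 3.1(iii), so $x\cdot y\in G(u)$. For (1.2.2), Proposition 3.1(iv) gives $\theta(x^{-1})=\theta(x)=u$, so $x^{-1}\in G(u)$. Essentially, this is the content of Proposition 3.4, reread in the language of Definition 4.1.

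For (ii), I first observe that $\theta$ maps every element of $G$ into $G_{0}$. Hence every $x\in G$ lies in $G(\theta(x))$, so $Is(G)=\bigcup_{u\in G_0}G(u)=G$; the union is moreover disjoint. It then suffices to show that $(G,G_0)$ is a normal almost subgroupoid of itself, although I will still phrase the checks through the fibres $G(u)$ to match the statement. Wideness holds because $\theta(Is(G))=\bigcup_{u}\{u\}=G_0$, using $u\in G(u)$.

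For closure under products, take $x,y\in Is(G)$ with $(x,y)\in G_{(2)}$. Then $\theta(x)=\theta(y)=:u$, so both lie in the same $G(u)$, and part (i) gives $x\cdot y\in G(u)\subseteq Is(G)$. Closure under inverses follows in the same way from part (i).

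Normality is the only step that needs care, and I expect it to be the main obstacle. Let $g\in G$ and $h\in Is(G)$ be such that $g\cdot h\cdot g^{-1}$ is defined. By the remark after Definition 3.1, this forces $\theta(g)=\theta(h)=\theta(g^{-1})=u$. Repeated use of Proposition 3.1(iii) gives $\theta(g\cdot h\cdot g^{-1})=\theta(g)=u$, so the conjugate lies in $G(u)\subseteq Is(G)$. No genuine difficulty arises, because all three factors live in the single group $G(u)$. The only thing to watch is applying the definedness criterion for triple products correctly, and not confusing the almost-groupoid setting with the source/target setting of Brandt groupoids, where $Is$ is a more delicate object.
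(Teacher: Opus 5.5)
Your proof is correct. The paper gives no proof of this proposition; it only quotes it from the author's earlier work. The natural comparison is therefore with the fibrewise reasoning used around it. Your part (i) is exactly Proposition 3.4 reread through Definition 4.1. Your normality check is the same computation the paper does for $G_{0}$ in Example 4.1(iii), and for $Z(G)$ inside $Is(G)$ in Proposition 4.3(iii).

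What your write-up adds is the observation that, because $\theta$ takes values in $G_{0}$, every $x\in G$ lies in $G(\theta(x))$. Hence $Is(G)=G$ in every almost groupoid. The paper notes this only for the ${\bf Z}_{6}$ example (Example 4.2), as if it were special, and never states the equality in general. Once you have $Is(G)=G$, part (ii) reduces to the triviality that $(G,G_{0})$ is a normal almost subgroupoid of itself. Indeed, $\theta(G)=G_{0}$ by surjectivity of $\theta$, and closure under products, inverses and conjugates is automatic.

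So your subsequent fibrewise checks are correct but redundant. Their only merit is that this form of the argument carries over, with source/target bookkeeping, to ordinary groupoids, where the isotropy subgroupoid really is smaller than the whole groupoid.
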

The pair $~(Is(G), G_{0}) $ is called the {\bf isotropy almost subgroupoid} of $~G. $\\[-0.2cm]

If $~(G, \theta, m, \iota, G_{0}) $ is an almost groupoid and $ (H, H_{0}), ~(K, K_{0}) $  are almost subgroupoids of $ G, $ we define the set:\\[-0.3cm]
$$~~~~~~~~~~~~~\begin{array}{l}
 H\coprod K:= H\cup K ~~~\hbox{when}~~~ H\cap K=\emptyset.
\end{array}~~~~~~~~~~~~~~~~~~~~~~~~~~~~~~~~~~~~~~~~~~~~~ (2)$$

The set $~H\coprod K $ is called the {\it disjoint union} of the almost subgroupoids $ H $ and $ K. $

In general, if $~\{(H_{i}, H_{0,i})~|~i\in I\}~$ is a family of almost subgroupoids  such that $~H_{i}\cap H_{j}=\emptyset, ~(\forall)i,j\in I, i\neq j, $  then the
set  $~H = \cup_{i\in I} H_{i}~$ is called the {\it disjoint union} of the almost subgroupoids $ H_{i}, i\in I~ $ and it is denoted by  $~\coprod_{i\in I} H_{i}. $

\begin{Prop}~ {\it Let $ (G, G_{0}) $  be an almost groupoid.\\
$(i)~$ If $~(H, H_{0}), ~(K, K_{0}) $  are almost subgroupoids of $ G, $ then $~H\coprod K~$ is an almost subgroupoid of $ G,~$ having $ H_{0}\cup K_{0} $
as units set.\\
$(ii)~$ If  $~ \{(H_{i}, H_{0,i})~|~i\in I \}~$ is a disjoint family of almost subgroupoids of $~G, $ then  $~\coprod_{i\in I} H_{i} $ is an almost subgroupoid  of $~G.$}
\end{Prop}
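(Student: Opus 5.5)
The plan is to verify directly the two conditions (1.1) and (1.2) of Definition 4.1 for the set $H\coprod K = H\cup K$, taking $H_{0}\cup K_{0}$ as the candidate units set. Both $H\cup K\subseteq G$ and $H_{0}\cup K_{0}\subseteq G_{0}$ are nonempty. Condition (1.1) is immediate because images commute with unions:
$$\theta(H\cup K)=\theta(H)\cup\theta(K)=H_{0}\cup K_{0}.$$
Closure under inversion (1.2.2) is also immediate. If $x\in H\cup K$, then $x$ lies in $H$ or in $K$, and each of these is closed under inversion.

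The only real content is closure under multiplication (1.2.1). Take $x,y\in H\cup K$ with $(x,y)\in G_{(2)}$, so $\theta(x)=\theta(y)$. If both lie in $H$, or both lie in $K$, then $x\cdot y$ lies in that same set, and we are done.

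The step I expect to be the main obstacle is the mixed case, say $x\in H$ and $y\in K$. Here I will show that this case cannot occur. Since $H$ is closed under inversion and multiplication, $x^{-1}\in H$ and so $\theta(x)=x\cdot x^{-1}\in H$ by (AG3). In the same way $\theta(y)=y\cdot y^{-1}\in K$. Composability gives $\theta(x)=\theta(y)$, and this common element would then belong to $H\cap K=\emptyset$, which is a contradiction. So no composable pair has one factor in $H$ and the other in $K$, and closure under multiplication follows. This proves (i).

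For (ii) I will repeat the argument with $H=\bigcup_{i\in I}H_{i}$ and units set $\bigcup_{i\in I}H_{0,i}$. Condition (1.1) follows from $\theta(\bigcup_{i} H_{i})=\bigcup_{i}\theta(H_{i})$, and inversion is handled componentwise as before. For multiplication, take a composable pair with $x\in H_{i}$ and $y\in H_{j}$. If $i\neq j$, then as above $\theta(x)\in H_{i}$ and $\theta(y)\in H_{j}$ are equal, which contradicts $H_{i}\cap H_{j}=\emptyset$. Hence $i=j$, and $x\cdot y\in H_{i}\subseteq H$.
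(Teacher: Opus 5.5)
Your proof is correct and follows the same route as the paper: a direct check of conditions (1.1), (1.2.1) and (1.2.2) of Definition 4.1 for $H\cup K$ (resp. $\bigcup_{i\in I}H_{i}$). Your version is more complete on one point. The paper simply asserts that $x,y\in H\cup K$ gives $x,y\in H$ or $x,y\in K$ (and, in (ii), that composable elements lie in the same $H_{i}$), which is only true for composable pairs. Your argument that $\theta(x)=x\cdot x^{-1}\in H$, $\theta(y)\in K$ and $\theta(x)=\theta(y)$ would contradict $H\cap K=\emptyset$ supplies the justification the paper leaves out.
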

{\bf Proof.} $ (i)~$ Clearly, $~\theta(H \cup K) = \theta(H)\cup \theta(K) = H_{0}\cup K_{0}, $ since $ \theta(H)=H_{0}, \theta(K)= K_{0}~$ and $~H\cap K =\emptyset. $
 If $ x , y\in H\cup  K, $ then $ x, y \in H~$ or $~ x,y \in K.~$  If  $~(x,y) \in G_{(2)}, $ then $~(\exists) x y $ and it follows that $ x y\in H~$ or $ xy\in K, $  since $ H $ and $ K $ are almost
 subgroupoids of $ G.$  Hence $~xy\in H\cup K.~$  Hence, the condition (1.2.1) in Definition 4.1 is satisfied.

Also, if  $ x\in H\cup K, $ then $ x \in H $ or $ x\in K. $  Then  $ x^{-1}\in H~$ or $ x^{-1}\in K~$  and $ x\in H\cup K.~$ Hence , the condition (1.2.2) in Definition 4.1 holds.
We conclude that, $ H\coprod K $ is an almost subgroupoid of $ G. $\\
$ (ii)~$ Denote $ H:= \coprod_{i\in I} H_{i}. $ In this case, two elements $ x, y \in H~$ may be composed if and only if they lie in the same almost subgroupoid $~H_{i}.~$
Using the same reasoning as in the proof of statement $(i)$, we prove that $~\coprod_{i\in I} H_{i} $ is an almost subgroupoid  of $ G.~$ Its units set
$~H_{0}:= \cup_{i\in I} H_{i,0},~$ where $~H_{i,0}~$ is the units set of $~H_{i}.~\hfill\Box$

\begin{Cor}~{\it Let $ (G, G_{0}) $  be an almost groupoid. If  $~\{(H_{i}, H_{0,i})~|~i\in I\}~$ is a disjoint family of almost subgroupoids of $~G $ such that
$~\cup_{i\in I} H_{i,0} = G_{0},~$ then  $~(\coprod_{i\in I} H_{i}, G_{0}) $ is a wide almost subgroupoid  of $~G.$}
\end{Cor}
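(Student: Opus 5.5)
The plan is to deduce the corollary directly from Proposition 4.2(ii), then check the extra condition in Definition 4.2(i). Put $H:=\coprod_{i\in I} H_{i}$. Since the family $\{(H_{i}, H_{0,i})\}_{i\in I}$ is disjoint and each $H_i$ is an almost subgroupoid, Proposition 4.2(ii) already says that $H$ is an almost subgroupoid of $G$. Its units set is $\theta(H)=\cup_{i\in I}\theta(H_i)=\cup_{i\in I}H_{0,i}$. Here I use that $\theta$ commutes with unions of subsets, together with condition (1.1) of Definition 4.1 applied to each $H_i$.

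Next I invoke the hypothesis $\cup_{i\in I} H_{i,0}=G_{0}$. This gives $\theta(H)=G_{0}$, so the pair $(H,G_{0})$ satisfies (1.1) with units set exactly $G_0$. Conditions (1.2.1) and (1.2.2) for $(H,G_0)$ are the same closure conditions already verified for $H$, because they do not involve the units set. So $(H,G_0)$ is an almost subgroupoid of $G$ with $H_0=G_0$, which by Definition 4.2(i) means it is a wide almost subgroupoid.

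The only point needing care is the closure under multiplication inherited from Proposition 4.2(ii): if $x\in H_i$, $y\in H_j$ with $i\neq j$ and $(x,y)\in G_{(2)}$, then the product $xy$ is defined in $G$ and must lie in $H$. The proof of Proposition 4.2 asserts that composable elements lie in a common $H_i$. Since the statement says I may assume earlier results, I will cite Proposition 4.2(ii) as given rather than re-examine that step. Everything else is a one-line set-theoretic computation, so I expect no real obstacle.
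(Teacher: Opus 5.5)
Your argument is correct and is essentially the paper's proof, which simply combines Proposition 4.2(ii) with the definition of a wide almost subgroupoid via $\theta(\coprod_{i\in I} H_i)=\cup_{i\in I} H_{0,i}=G_0$. The step you flagged, which the paper's proof of Proposition 4.2 only asserts, does hold: for $x\in H_i$ one has $\theta(x)=x\cdot x^{-1}\in H_i$ by (1.2.1)--(1.2.2), so if $x\in H_i$ and $y\in H_j$ with $i\neq j$ were composable, then $\theta(x)=\theta(y)$ would lie in $H_i\cap H_j=\emptyset$, which is impossible.
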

{\bf Proof.}  This corollary follows immediately from definition of the wide almost groupoid and Proposition 4.2(ii).\hfill$\Box$

If $~(G, \theta, m, \iota, G_{0}) $ is an almost groupoid and $ a\in G,~$  we define the sets:\\[-0.3cm]
$$~~~~~~~~~~~\begin{array}{l}
 C(a):= \{ g\in G(\theta(a)) ~|~ g\cdot a = a\cdot g, ~(\forall) g\in G(\theta(a)) \}.
 \end{array}~~~~~~~~~~~~~~~~~~~~~~~~~~~~~~~~~~~~~~~~~ (3)$$
$$~~~~~~~~~~~~~~~\begin{array}{l}
 Z(G):= \{ a\in Is(G)~|~ x\cdot a = a\cdot x, (\forall) x\in G~ \hbox{such that}~ \theta(x)=\theta(a) \}.
 \end{array}~~~~~~~~~~~~~~~~~~~~~~~~(4) $$

$ C(a) $, resp. $~Z(G),~$ are called the {\it centralizer} of  $~ a $ in $ G ,$  resp. the {\it center} of  $~G.$

\begin{Prop}~{\it  Let $ (G, G_{0}) $  be an almost groupoid.  Then:\\
$(i)~~$ The centralizer   $~C(a)~$ of $ a\in G $ is an almost subgroupoid of $ G. $\\
$(ii)~~~Z(G) = \cup_{u\in G_{0}} Z(G(u))\subset G.~$\\
$(iii)~~$  The center $~Z(G)~$ of $~G~$ is a normal almost subgroupoid of $~Is(G).~$}
\end{Prop}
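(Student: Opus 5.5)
We will read definition (3) in the only meaningful way: $C(a)$ is the set of those $g\in G(\theta(a))$ with $g\cdot a=a\cdot g$. Throughout we use that, by Proposition 3.3, $G(u)=\theta^{-1}(u)$ is a group with unit $u$. We also use that every $x\in G$ lies in $G(\theta(x))$ by Corollary 3.1(i); hence $Is(G)=G$ as sets, and its units set is $G_0$. Fixing this interpretation is the main conceptual obstacle; once it is settled, each part reduces to standard group-theoretic facts inside a single isotropy group $G(u)$.

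For (i), put $u=\theta(a)$ and take $H_0=\{u\}$. Condition (1.1) holds for the following reasons. Every $g\in C(a)$ has $\theta(g)=u$. Moreover $u\in C(a)$, since $u\cdot a=a=a\cdot u$ by (AG2). Hence $\theta(C(a))=\{u\}$.

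For (1.2.1), let $g,h\in C(a)$. Their product is automatically defined, because both lie in $G(u)$. Using (AG1) repeatedly (all factors have unit $u$),
$$(g h)a=g(ha)=g(ah)=(ga)h=(ag)h=a(gh),$$
and $\theta(gh)=u$ by Proposition 3.1(iii).

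For (1.2.2), start from $ga=ag$ and multiply by $g^{-1}$ on both sides. Proposition 3.2(vi) together with (AG1)–(AG3) gives $a g^{-1}=g^{-1}a$. Proposition 3.1(iv) gives $\theta(g^{-1})=u$, so $g^{-1}\in C(a)$.

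For (ii), we prove the two inclusions directly. If $a\in Z(G)$, then $a\in G(u)$ with $u=\theta(a)$. By (4), $a$ commutes with every $x$ satisfying $\theta(x)=u$, that is, with every element of $G(u)$. Hence $a\in Z(G(u))$, the center of the group $G(u)$. Conversely, if $a\in Z(G(u))$, then $\theta(a)=u$, $a\in Is(G)$, and $a$ commutes with all $x$ such that $\theta(x)=u=\theta(a)$. Hence $a\in Z(G)$.

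For (iii), we use the description from (ii).

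First, $Z(G)$ is wide. Each unit $u\in G_0$ lies in $Z(G(u))$, being the identity of the group $G(u)$. Hence $\theta(Z(G))=G_0$, which is the units set of $Is(G)$.

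Second, $Z(G)$ is closed under multiplication and inversion. If $x,y\in Z(G)$ and $(x,y)\in G_{(2)}$, then $\theta(x)=\theta(y)=u$, so both lie in $Z(G(u))$. The center of a group is a subgroup, so $xy\in Z(G(u))\subset Z(G)$. Inverses are handled the same way; alternatively, repeat the computation from (i).

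Third, $Z(G)$ is normal. Let $g\in Is(G)$ and $h\in Z(G)$ be such that $g\cdot h\cdot g^{-1}$ is defined. Then $\theta(g)=\theta(h)=u$, so $h$ commutes with $g$. Using (AG1) and (AG3),
$$g h g^{-1}=h g g^{-1}=h\,\theta(g)=h\in Z(G).$$
This completes the proof.
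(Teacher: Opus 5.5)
Your proof is correct and follows the paper's own argument: you close $C(a)$ under products by the associativity computation inside the group $G(\theta(a))$ and handle inverses the same way, reduce $Z(G)$ to the centers $Z(G(u))$ of the isotropy groups, and obtain normality from $g\cdot h\cdot g^{-1}=h\cdot\theta(g)=h$. Your double-inclusion proof of (ii) and your explicit check that each $u\in G_0$ lies in $Z(G(u))$ (which gives wideness) fill in steps the paper only asserts, and your reading of definition (3) matches how the paper computes $C(p_1)$ in its final example.
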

{\bf Proof.} $ (i)~$ Note that $~\theta(a)\in C(a)\subset G(\theta(a)), $  since $~g\cdot \theta(a) = \theta(a)\cdot g $ for all $ g\in G(\theta(a)). $  Let $ x, y\in C(a). $  Then $ x, y \in G(\theta(a)),~g\cdot x = x\cdot g ~$ and $ g\cdot y = y\cdot g. $ Since $ G(\theta(a)) $ is a group, implies  $~(\exists)~ g\cdot (x\cdot y) $ and $~ (x\cdot y )\cdot g. $  We have\\
$~ g\cdot (x\cdot y)= (g\cdot x)\cdot y = (x\cdot g)\cdot y = x\cdot (g\cdot y)= (x\cdot y)\cdot g .$  Hence, $~x\cdot y\in C(a).$ Also, if $ x\in C(a), $ then  is easy to prove that\\
$~~~~~g\cdot x = x\cdot g~~~\Longleftrightarrow ~~~ g\cdot x^{-1} = x^{-1}\cdot g. $\\
Hence, $~x^{-1} \in C(a).~ $  Therefore, $~C(a) ~$ is an almost subgroupoid of $~G. $\\
 $(ii)~~$ Clearly,  $~G_{0}\subseteq Z(G)\subseteq Is(G). $\\
 $(iii)~~$  The set $~Z(G) = \cup_{u\in G_{0}} Z(G(u))\subset G~$ is a normal almost subgroupoid of $~Is(G).~$
Let $ x,y \in Z(G) ~$ and suppose that $ (\exists) x\cdot y. $ Then, $ x\in Z(G(u))~$ and $~y\in Z(G(v)~$ for some $ u,v\in G_{0}. $ Moreover, $ \theta(x)=u~$ and $~\theta(y) = v~$ and now since
$\theta(x)=\theta(y), $ it follows that $ u=v.$ Thus, $ y\in G(u)~$ and hence $ x\cdot y\in Z(g(u))\subset Z(G). $  Also, if $ x\in Z(G), $ then $ x\in Z(G(u))~$ for some $ u\in G_{0}~$ and since $ Z(G(u))~$
is a subgroup of $~G(u), $  implies  $ x^{-1} \in Z(G(u))\subset Z(G). $ Therefore, $~Z(G)~$ is a wide  almost subgroupoid.\\
Moreover, let $ g\in Is(G) $ and $ h\in Z(G)~$ such that $ \theta(h)=\theta(g). $ Then, $ g\in G(u) $ for some $~u\in G_{0}~$ and $~\theta(g)=u. $ Then $ \theta(h)=u $ and $~h\in Z(G(u)). $
Since $(\exists)~ g\cdot h \cdot g^{-1}, $ we have that $~g\cdot h \cdot g^{-1}= (g\cdot h) \cdot g^{-1})= (h\cdot g \cdot g^{-1}= h\cdot (g \cdot g^{-1})=h\cdot \theta(g)=h\cdot \theta(h)=h\in Z(G).~$
Hence, $~Z(G)~$ is a normal almost subgroupoid of $~Is(G).\hfill\Box$

If $~(G, \theta, m, \iota, G_{0}) $ is an almost groupoid and $ (H, H_{0}), ~(K, K_{0}) $  are almost subgroupoids of $ G, $ we define the set:\\[-0.4cm]
$$~~~~~~~~~~~~~~~\begin{array}{l}
 H K:= \{ h\cdot k ~|~\theta(h)=\theta(k), ~h\in H, k\in K \}.
 \end{array}~~~~~~~~~~~~~~~~~~~~~~~~~~~~~~~~~~~~~~~~ (5)$$

The following propositions extends to the context of almost groupoids several elementary results for groups.
\begin{Prop}~{\it Let $ (G, G_{0}) $  be an almost groupoid.  If $~(H, H_{0}) $ and $ (K,K_{0})~$ are wide almost subgroupoids such that $ H K =K H,$ then $ H K $ is a wide almost subgroupoid of $ G.$}

{\bf Proof.} Note that $~G_{0} \subseteq H K, ~$ since $\theta(g)=\theta(g)\cdot \theta(g)\in H K $ for each $ g\in G. $ Hence $~ H K\neq \emptyset. $  It follows
$~\theta(H K) = G_{0}.~$ Suppose  $~H K=K H. $  Let $~h_{1}, h_{2} \in H $ and $~k_{1}, k_{2} \in K. $  If $ x_{1} , x_{2}\in H K, $ then $ x_{1}= h_{1}k_{1} ~$ and
$ x_{2}= h_{2}k_{2} ~$ such that $ \theta(h_{1})= \theta(k_{1}) ~ $  and $ \theta(h_{2}) = \theta(k_{2}).~$ If $~(\exists) x_{1} x_{2}, $  then\\
 $x_{1}x_{2}= (h_{1} k_{1})(h_{2}k_{2})=h_{1} (k_{1}h_{2})k_{2}= h_{1} (h^{\prime} k^{\prime}) k_{2}= ( h_{1} h^{\prime}) ((k^{\prime}k_{2})= h_{3} k_{3}\in H K, $ such that $ \theta(h_{1}h^{\prime})=\theta(k^{\prime} k_{2}). $  Hence, the condition (1.2.1) in Definition 4.1 is satisfied. Finally, if  $ x\in H K, $ then $ x = h k $ with $ h\in H , k\in K  $ and $ \theta(h) = \theta (k). $ Then  $ x^{-1}= (h k)^{-1})= k^{-1} h^{-1}\in K H = H K.~$ Thus, the condition (1.2.2) in Definition 4.1 holds. We conclude that, $ HK $ is a wide almost subgroupoid.\hfill$\Box$
\end{Prop}
\begin{Prop}~{\it Let $ (G, G_{0}) $  be an almost groupoid   and $~H_{i\in I} $  a family of almost subgroupoids of $~G. $ Then:
$(i)~~$ If $~\cap_{i\in I} H_{i} \neq \emptyset, $ then $~\cap_{i\in I} H_{i}~$ is an almost subgroupoid of $~G. $
$(ii)~~$ If $~H_{i} $ is a wide (resp. normal) almost subgroupoid for each $ i\in I, $ then $~\cap_{i\in I} H_{i}~$ is a wide (resp. normal) almost subgroupoid of $~G.$}\\[-0.5cm]
\end{Prop}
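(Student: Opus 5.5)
We prove the statement by checking the conditions of Definition 4.1 directly for $H:=\cap_{i\in I} H_{i}$, with $H_{0}:=\theta(H)$ as its units set. Since $H\neq\emptyset$, the set $H_{0}$ is nonempty. Also $H_{0}\subseteq G_{0}$, because $\theta$ takes values in $G_{0}$. So condition (1.1) holds by the choice of $H_{0}$.

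For (i), we verify the closure conditions. For (1.2.1), take $x,y\in H$ with $(x,y)\in G_{(2)}$. For each $i\in I$ we have $x,y\in H_{i}$, and $H_{i}$ satisfies (1.2.1), so $x\cdot y\in H_{i}$. Hence $x\cdot y\in H$. For (1.2.2), if $x\in H$ then $x\in H_{i}$ for all $i$, so $x^{-1}\in H_{i}$ for all $i$, and therefore $x^{-1}\in H$. This proves that $(H,H_{0})$ is an almost subgroupoid of $G$. It is worth recording that $\theta(x)=x\cdot x^{-1}\in H$ for every $x\in H$, by (AG3) and closure. Thus $H_{0}\subseteq H$, as for any almost subgroupoid.

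For (ii) in the wide case, we must show $H_{0}=G_{0}$, and this is the one point needing care. First we show $G_{0}\subseteq H_{i}$ for each $i$. Let $u\in G_{0}$. Since $\theta(H_{i})=G_{0}$, there is $h\in H_{i}$ with $\theta(h)=u$. Then $u=h\cdot h^{-1}\in H_{i}$ by (AG3) and the closure of $H_{i}$. Hence $G_{0}\subseteq \cap_{i\in I}H_{i}=H$. In particular $H\neq\emptyset$ automatically, so the hypothesis of (i) is satisfied. Moreover, by Proposition 3.1(i), $\theta(u)=u$ for all $u\in G_{0}$, which gives $G_{0}\subseteq\theta(H)\subseteq G_{0}$. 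So $H_{0}=G_{0}$ and $H$ is wide.

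For (ii) in the normal case, $H$ is wide by the previous step. Let $g\in G$ and $h\in H$ be such that $g\cdot h\cdot g^{-1}$ is defined. For each $i\in I$ we have $h\in H_{i}$, and $H_{i}$ is normal, so $g\cdot h\cdot g^{-1}\in H_{i}$. Therefore $g\cdot h\cdot g^{-1}\in H$, and $H$ is a normal almost subgroupoid.

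The main obstacle is only the wide case. There, the requirement $\theta(H)=G_{0}$ does not pass to intersections formally, and we handle it by showing that every wide almost subgroupoid contains $G_{0}$ itself.
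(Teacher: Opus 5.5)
Your proof is correct and complete. The paper states this proposition without proof; it is only invoked afterwards to define $<S>$. So there is no argument of the paper's to compare against.

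Your direct check of Definitions 4.1 and 4.2 is the natural argument. You also correctly isolate the one non-formal point: that $\theta(\cap_{i\in I}H_i)=G_0$ in the wide case, which you settle by showing $G_0\subseteq H_i$ via $u=h\cdot h^{-1}$.

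The same observation, that $\theta(H_i)\subseteq H_i$ for every almost subgroupoid, gives a little more in (i). It shows that the units set of the intersection is exactly $\theta(\cap_{i\in I}H_i)=\cap_{i\in I}H_{0,i}$. Your wide case is the special instance $H_{0,i}=G_0$.
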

Suppose now $~S~$ is a subset of an almost groupoid $~(G, G_{0} ).~$ Consider the family of all almost subgroupoids of $~G~$ that contains $~S.~$ This family is not empty, since
 it clearly includes the groupoid $~G~$ itself. By Proposition 4.5, the intersection of all almost subgroupoids in this family is also an almost subgroupoid of $~G~$
 and it clearly contains $~S.~$ It is then the smallest almost subgroupoid of $~G~$ which contains $~S.~$ We denote it by $~< S >~$
and call it the {\it generated almost subgroupoid} by $~S.~$

If $~x_{i}\in G,~i=\overline{1,n},$ then:\\
{\it the product $~x_{1}\cdot x_{2}\ldots \cdot x_{n}~$ is defined in $~G~~\Leftrightarrow ~~ \theta (x_{j}) = \theta(x_{j+1})~~(\forall)~~j = \overline{1, n-1}.~$}

It is easy to check the following
\begin{Prop}~{\it If $~S~$ is a nonempty set of an almost groupoid $~(G, G_{0},~$  then the almost subgroupoid $~< S >~$ generated by $~S~$ consists of all products of the
form\\
$~~~~~~~~x_{1}^{\varepsilon_{1}}x_{2}^{\varepsilon_{2}}\ldots x_{n}^{\varepsilon_{n}},~$ where $~x_{i} \in S,~\varepsilon_{i} = \pm 1,~ i = \overline{1,n}~$  for each $~n\in {\bf N}^{*}.$ }

In particular, the almost subgroupoid generated  by $ S=\{a\} \subset G, $  is given by:\\[-0.5cm]
\end{Prop}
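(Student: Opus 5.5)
Let $P$ denote the set of all defined products $x_{1}^{\varepsilon_{1}}x_{2}^{\varepsilon_{2}}\ldots x_{n}^{\varepsilon_{n}}$ with $x_{i}\in S$, $\varepsilon_{i}=\pm 1$, $n\in {\bf N}^{*}$. Such a product is defined exactly when $\theta(x_{j}^{\varepsilon_{j}})=\theta(x_{j+1}^{\varepsilon_{j+1}})$ for all $j$; since $\theta(x^{-1})=\theta(x)$ by Proposition 3.1(iv), this means all the $x_{i}$ share one unit. By the associativity axiom (AG1), iterated through Proposition 3.1(v), the bracketing does not matter. The plan is the usual double inclusion. First I show that $(P,\theta(P))$ is an almost subgroupoid containing $S$, so $\langle S\rangle\subseteq P$. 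Then I show that every almost subgroupoid containing $S$ contains $P$, so $P\subseteq\langle S\rangle$.

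For the first inclusion, note that $S\subseteq P$ by taking $n=1$, $\varepsilon_{1}=1$. In particular $P\neq\emptyset$, and condition (1.1) of Definition 4.1 holds trivially with $H_{0}:=\theta(P)\subseteq G_{0}$.

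For closure under multiplication, take $p=x_{1}^{\varepsilon_{1}}\cdots x_{n}^{\varepsilon_{n}}$ and $q=y_{1}^{\delta_{1}}\cdots y_{m}^{\delta_{m}}$ in $P$ with $(p,q)\in G_{(2)}$. By Proposition 3.1(iii) and (iv), $\theta(p)=\theta(x_{1})$ and $\theta(q)=\theta(y_{1})$, so every factor of both words has the same unit. Hence the concatenated word is defined and, by associativity, equals $p\cdot q$, so $p\cdot q\in P$.

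For closure under inversion, I apply Proposition 3.2(iv) repeatedly, by induction on $n$, to get $(x_{1}^{\varepsilon_{1}}\cdots x_{n}^{\varepsilon_{n}})^{-1}=x_{n}^{-\varepsilon_{n}}\cdots x_{1}^{-\varepsilon_{1}}$. Here $(x^{-1})^{-1}=x$ by Proposition 3.2(v), so each exponent $-\varepsilon_{i}$ is again $\pm 1$. This word is defined because all factors still share the unit $\theta(x_{1})$, so $p^{-1}\in P$.

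For the reverse inclusion, let $(H,H_{0})$ be any almost subgroupoid with $S\subseteq H$. By (1.2.2), every $x_{i}^{\varepsilon_{i}}$ lies in $H$. Induction on $n$ with (1.2.1) then shows that every defined word lies in $H$: the prefix $x_{1}^{\varepsilon_{1}}\cdots x_{n-1}^{\varepsilon_{n-1}}$ lies in $H$ and has the same unit as $x_{n}^{\varepsilon_{n}}$. So $P\subseteq H$, and since $\langle S\rangle$ is the intersection of all such $H$, we get $P\subseteq\langle S\rangle$.

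The main point requiring care is not a real obstacle but a bookkeeping one: showing that a defined word has a well-defined value independent of bracketing, and that its unit is $\theta(x_{1})$. For both facts I would prove, by a short induction from (AG1) and Proposition 3.1(iii)–(v), that any two bracketings of a defined word agree and that $\theta$ of a product equals $\theta$ of its first factor. Everything else reduces to the two-factor rules already established.
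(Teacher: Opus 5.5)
Your argument is correct and is the standard double-inclusion proof that the paper leaves to the reader (it gives no proof, only saying the result is ``easy to check''), and it uses exactly the paper's definition of $\langle S\rangle$ as the intersection of all almost subgroupoids containing $S$. Two small remarks: your bracketing bookkeeping is immediate because every factor of a defined word lies in the isotropy group $G(\theta(x_{1}))$ (Proposition 3.4); the same observation gives the ``in particular'' case (6), which your proposal does not address: words in $a^{\pm 1}$ reduce in $G(\theta(a))$ to $a^{n}$, $n\in{\bf Z}$, with $a^{0}=\theta(a)=a\cdot a^{-1}$.
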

$$~~~~\begin{array}{l}
< a> =\{ a^{n} ~|~(\forall)~n\in {\bf Z}\},~\hbox{where}~~ a^{0}=\theta (a),~~a^{n}= a^{n-1}\cdot a,~~a^{-n}=(a^{-1})^{n},~n\in {\bf N}^{\ast}\\
\end{array}~~~~(6)$$
and it is called the {\it cyclic almost subgroupoid generated by} $ a\in G. $

\begin{Rem}~{\rm Let $~(G, G_{0}~$ be an almost groupoid  and $~a\in G.$  Then:\\
$(i)~~~ < a >\subset G(\theta(a))~~$ and $~<a>~$ is a subgroup of the isotropy group $~G(\theta(a)). $\\
$(ii)~$ If $~(\exists)~ n\in {\bf N}, ~n\geq 2~ $ such that $~a^{n} =\theta(a),$ then  $ < a > $ is a cyclic subgroup of order $ n~ $ and
$~<a>  =\{ \theta (a), a, a^{2}, ..., a^{n-1} \}.$}\\[-0.4cm]
\end{Rem}
\begin{Ex}~{\bf The almost groupoid associated to  $~{\bf Z}_{6} .$ } {\rm~Consider the group $~({\bf Z}_{6}, + ) $ of integers modulo $ 6, $ where $~{\bf Z}_{6} = \{ 0, 1, 2, 3, 4, 5 \}~$ and the subgroup $~H =\{ 0, 2, 4\} \subset {\bf Z}_{6}. $\\
Let $~ G:= H \times {\bf Z}_{6}~$ and $ G_{0}:=\{ (0,0), (0,1), (0,2), (0,3), (0,4), (0,5)\} \subset G. $  Then
$ (G, \theta, \ominus, \iota, G_{0})~$ is an almost groupoid over $ G_{0}, $ where  the set of
composable pairs $~ G_{(2)}~$ and  its structure functions are given by:\\
$\theta : G=H\times {\bf Z}_{6}~ \rightarrow~ G_{0}, (a,b)~\rightarrowtail \theta (a,b):=(0,b-a);$\\
$ G_{(2)} = \{ ((a,b), (c, d))\in G^{2}~|~\theta (a,b) =\theta (c, d)\} = \{ ((a,b), (c, d))\in G^{2}~|~b-a=d-c\};$\\
$(a,b)~\ominus~(c,d):=(a+c, b+c),~ (\forall)~a, b, c, d \in {\bf Z}_{6}~~$ and\\
 $\iota(a,b):=(-a,b-2 a), (\forall)~a,b\in {\bf Z}_{6}. $\\[-0.4cm]

It is easy to verify that the conditions $ ({\bf AG1}) - {\bf AG3}) $ of Definition 3.1 hold. For this, let $~ x, x_{1}, x_{2}\in G ,~$ where $~x=(a, b), x_{1}=(a_{1}, b_{1}), x_{2}=(a_{2}, b_{2}) $
with $~a, a_{1}, a_{2} \in H $ and $~b, b_{1}, b_{2}\in {\bf Z}_{6}.~$  The element  $ x\ominus x_{1} \ominus x_{2} $ is defined $~~\Leftrightarrow~~b-a= b_{1}- a_{1} = b_{2}-a_{2}.$ We have:\\
$ (1)~~~(x\ominus x_{1})\ominus x_{2}= ((a, b)\ominus (a_{1}, b_{1}))\ominus (a_{2}, b_{2})= (a +a_{1}, b +a_{1})\ominus (a_{2}, b_{2})= \\
(a +a_{1}+a_{2}, b_{1}+ a_{1}+ a_{2}) =x\ominus (x_{1}\ominus x_{2}); $\\
$ (2)~~~x\ominus\theta(x)= (a, b)\ominus \theta(a, b)= (a, b)\ominus(0, b-a)=(a, b)= x =\theta(x)\ominus x;$\\
$ (3)~~~x\ominus \iota(x)= (a, b)\ominus \iota(a, b))= (a, b)\ominus(-a, b-2a)=(0, b-a)= \theta(x)= \iota(x)\ominus x.$\\[-0.4cm]

We conclude that $ G = H\times {\bf Z}_{6}~$ is an almost groupoid over $ G_{0}.~$  Observe that $~G= \{ u_{i}, p_{j}\in H\times {\bf Z}_{6}~|~ i=\overline{1,6}, j=\overline{1,12}\}~$ is a finite almost groupoid of order $ 18, $ where:\\
$~u_{1}=(0,0),~ u_{2}=(0,1),~ u_{3}=(0,2),~ u_{4}=(0,3),~ u_{5}=(0,4),~ u_{6}=(0,5),$\\
$~p_{1}=(2,0),~ p_{2}=(2,1),~ p_{3}=(2,2),~ p_{4}=(2,3),~ p_{5}=(2,4),~ p_{6}=(2,5),~$\\
$~p_{7}=(4,0),~ p_{8}=(4,1),~ p_{9}=(4,2),~ p_{10}=(4,3),~ p_{11}=(4,4),~ p_{12}=(4,5).$\\[-0.4cm]

We now give some examples for completing the following three  tables:\\
$\theta(u_{4}) =\theta(0,3)=(0,3-0)=u_{4};~~~~~~~~~~~~~~~\theta(p_{7})=\theta(4,0)= (0, 0-4)=(0,2)=u_{3}.$\\
$\iota(u_{6})=\iota(0,5)=(0, 5-2\cdot 0)=(0, 5)= u_{6};~~
\iota(p_{4})=\iota(2,3)=(-2, 3-2\cdot 2)=(4,5)=p_{12};$\\
$ G(u_{3}) = \{ u_{3}, p_{5}, p_{7}\}; ~~~~~~~~ G(\theta(p_{3}))= G(u_{1})=\{ u_{1}, p_{3}, p_{11}\}.$\\
$u_{6}\ominus p_{10}= (0,5)\ominus (4,3)= (0+4, 5+4)=(4,3)=p_{10};$\\
$p_{4}\ominus p_{4}= (2,3)\ominus (2,3)= (2+2, 3+2)=(4,5)=p_{12};$\\
$p_{8}\ominus p_{6}= (4,1)\ominus (2,5)= (4+2, 1+2)=(0,3)=u_{4}.$\\[-0.4cm]

 The element $~p_{1}\ominus p_{10} ~$ is not defined, since $ \theta(p_{1})=\theta(2,0)= u_{5},~ \theta(p_{10})=\theta(4,3)= (0,3-4)=(0,5)= u_{6}~$  and $~u_{5}\neq u_{6}. $
{\small
$$\begin{array}{|r|c|c|c|c|c|c|c|c|c|c|c|c|c|c|c|c|c|c|} \hline
       g    & u_{1} & u_{2} & u_{3} & u_{4} & u_{5} & u_{6} & p_{1} & p_{2} & p_{3} & p_{4} & p_{5} & p_{6} & p_{7} & p_{8} & p_{9} & p_{10} & p_{11} & p_{12}\\ \hline
  \theta(g) & u_{1} & u_{2} & u_{3} & u_{4} & u_{5} & u_{6} & u_{5} & u_{6} & u_{1} & u_{2} & u_{3} & u_{4} & u_{3} & u_{4} & u_{5} &  u_{6} &  u_{1}& u_{2} \cr \hline
     \iota(g)  & u_{1} & u_{2} & u_{3} & u_{4} & u_{5} & u_{6} & p_{9} & p_{10} & p_{11} & p_{12} & p_{7} & p_{8} & p_{5} & p_{6} & p_{1} & p_{2}&p_{3} & p_{4}\cr \hline
\end{array}$$}
The multiplication operation $~\ominus $ on  $~G= H\times {\bf Z}_{6}~$ is given in the following table:
{\small
$$\begin{array}{|r|c|c|c|c|c|c|c|c|c|c|c|c|c|c|c|c|c|c|} \hline
\ominus  & u_{1} & u_{2} & u_{3} & u_{4} & u_{5} & u_{6} & p_{1} & p_{2} & p_{3} &
p_{4} & p_{5} & p_{6} & p_{7} & p_{8} & p_{9} & p_{10} & p_{11} & p_{12}\\ \hline
u_{1}  & u_{1} &  &  &  &  &  &  &  &p_{3}  &  &  &  &  & &  &  & p_{11} &\cr \hline
u_{2}  &  & u_{2} &  &  &  &  &  &  &  & p_{4}  &  &  &  &  & &  & & p_{12}\cr \hline
u_{3}  &  &  & u_{3} &  &  &  &  &  &  &  & p_{5} &  & p_{7}  &  & &  & &\cr \hline
u_{4}  &  &  &  & u_{4} & & & &  &   &  &  & p_{6} & & p_{8}  &  & &  & \cr \hline
u_{5} &  &  &  &  & u_{5} & & p_{1} & & &  &  &  &  &  & p_{9} &  & &\cr \hline
u_{6}  &  &  &  &  &  &  u_{6} & & p_{2} & & & &  &  &  & & p_{10} & &\cr \hline
p_{1} &  &  &  &  & p_{1} &  & p_{9} & & & & & & &  & u_{5} &  & &\cr \hline
p_{2}  &  &  &  &  &  & p_{2} &  & p_{10} &  &  &  &  &  &  & & u_{6} & &\cr \hline
p_{3} & p_{3} &  &  &  & &  & & & p_{11} & & &  &  &  & &  & u_{1} &\cr \hline
p_{4}  &  & p_{4} & &  & & & &  &  & p_{12} & & & & & &  & & u_{2}\cr \hline
p_{5} &  &  & p_{5} & &  &  & & & & &  p_{7} &  & u_{3} & &  & & &\cr \hline
p_{6} &  & & & p_{6} &  & & &  &  & & & p_{8} & & u_{4} & &  & &\cr \hline
p_{7} & &  & p_{7} &  &  & & & & &  & u_{3} & & p_{5} & & &  & &\cr \hline
p_{8} &  & &  & p_{8} &  &  &  & & & & & u_{4} & & p_{6} &  &  & &\cr \hline
p_{9} & &  &  &  & p_{9} &  & u_{5} & & & & & & &  & p_{1} &  & &\cr \hline
p_{10} & &  &  &  &  & p_{10}  &  & u_{6} & & & & & &  &  & p_{2}  & &\cr \hline
p_{11} & p_{11} &  &  &  &  &  &  & & u_{1} & & & & &  &  &  & p_{3} &\cr \hline
p_{12} & & p_{12} &  &  &  &  &  & & & u_{2} & & & &  &  &  & & p_{4} \cr \hline
\end{array}.~$$}

The centralizer $~ C(p_{1}) ~$ of $ p_{1} $ in the almost groupoid $ G $ is\\
$C(p_{1}) = \{ g\in G(\theta(p_{1}) | g\ominus p_{1} =p_{1}\ominus g \} =  \{ g\in G(u_{5}) | g\ominus p_{1} =p_{1}\ominus g \} =
\{ u_{5}, p_{1}, p_{9} \} =  G(u_{5}).~$ Also, we have  $~C(u_{1}) =G(u_{1}). $\\
Finally, it is easy to observe that the isotropy almost subgroupoid $ Is(G) $ and the center $~Z(G) ~$ of $ G~$ satisfy the relation $ Is(G) = Z(G) = G. $}
\end{Ex}

\vspace*{0.2cm}

Author's adress\\[0.2cm]
\hspace*{0.7cm}West University of Timi\c soara. Seminarul de Geometrie \c si Topologie.\\
\hspace*{0.7cm} Teacher Training Department. Timi\c soara.  Romania.\\
\hspace*{0.7cm} Bd. V. P{\^a}rvan,no.4, 300223, Timi\c soara, Romania\\
\hspace*{0.7cm}E-mail: mihai.ivan@e-uvt.ro\\

\end{document}